\newtheorem{theorem}{Theorem}[section]
\newtheorem{lemma}[theorem]{Lemma}
\newtheorem{prop}[theorem]{Proposition}
\newtheorem*{thm*}{Theorem}
\theoremstyle{definition}
\newtheorem{definition}[theorem]{Definition}
\theoremstyle{remark}
\newtheorem{exa}[theorem]{Example}
\newtheorem{rem}[theorem]{Remark}
\numberwithin{equation}{section}
\renewcommand{\phi}{\varphi}
\newcommand{\coloneqq}{\mathrel{\mathop:}=}
\newcommand{\Z}{\mathds{Z}}
\newcommand{\N}{\mathds{N}}
\newcommand{\C}{\mathds{C}}
\newcommand{\R}{\mathds{R}}
\newcommand{\Real}{\mathrm{Re}}
\newcommand{\eps}{\varepsilon}
\DeclareMathOperator{\fix}{Fix}
\newcommand{\applied}[2]{\langle #1,#2\rangle}
\DeclareMathOperator{\lin}{span}
\title[Irreducible semigroups of Harris operators]
{On the Peripheral Point Spectrum and the Asymptotic Behavior of Irreducible Semigroups of Harris Operators}
\author{Moritz Gerlach}
\address{University of Ulm\\Institute of Applied Analysis\\89069 Ulm\\Germany}
\email{moritz.gerlach@uni-ulm.de}
\date{}
\keywords{peripheral point spectrum, irreducible, Harris operator, compact operator, asymptotic behavior}
\subjclass[2010]{Primary 47A11; Secondary: 47B65, 47G10, 47D06}
\begin{document}

\begin{abstract}
Given a positive, irreducible and bounded $C_0$-semigroup on a Banach lattice
with order continuous norm, we prove that the peripheral point spectrum of its generator is trivial
whenever one of its operators dominates a non-trivial compact or kernel operator.
For a discrete semigroup, i.e.\ for powers of a single operator $T$, 
we show that the point spectrum of some power $T^k$ intersects the unit circle at most in $1$. 

As a consequence, we obtain a sufficient condition for
strong convergence of the $C_0$-semigroup and for a subsequence of the powers of $T$, respectively.
\end{abstract}
\maketitle
\section{Introduction}

E. B. Davies proved in \cite{davies2005} that $\sigma_p(A)\cap i\R \subseteq\{0\}$ if $A$ is the generator 
of a positive and contractive semigroup on $\ell^p$ for some $1\leq p < \infty$. This result was generalized by V. Keicher
in \cite{keicher2006} to positive, bounded and 
strongly continuous semigroups on atomic Banach lattices with order continuous norm.
A further generalization to $(w)$-solvable semigroups on super-atomic Banach lattices was found by M. Wolff \cite{wolff2008}.

Independently, G. Greiner observed in \cite[Thm.\ 3.2 and Kor.\ 3.11]{greiner1982} that $\sigma_p(A)\cap i \R \subseteq\{0\}$ 
for the generator $A$
of a positive, contractive and strongly continuous semigroup on $L^p(\Omega,\mu)$, $1\leq p <\infty$, 
if the semigroup contains a kernel operator. 
This covers in particular the atomic case because every regular operator on an order 
complete atomic Banach lattice is a kernel operator.
Greiner's result is also in the center of a survey article by W.\ Arendt \cite{arendt2008} that includes 
a generalization to Abel bounded semigroups.

Under the assumption that the semigroup is irreducible,
we generalize Greiner's result in the present article in two respects.
First, we consider general Banach lattices with order continuous norm instead of $L^p$ 
and secondly, we merely assume 
that one operator of the semigroup dominates a non-trivial compact operator.
Under suitable assumptions, the latter holds in particular if the
semigroups contains a Harris operator, i.e.\ one operator dominates a non-trivial kernel operator.
More precisely, we prove the following theorem in Section~\ref{sec:mainresults}.
\begin{thm*}
	Let $E$ be a Banach lattice with order continuous norm and let
	$\mathscr{T}=(T(t))_{t\in[0,\infty)}$ be a positive, irreducible,
	bounded and strongly continuous semigroup on $E$ with generator $A$. Assume that there exists $t_0>0$ such that 
	$T(t_0)\geq K > 0$ where $K$ is a compact or a kernel operator.
	Then $\sigma_p(A) \cap i\R \subseteq \{ 0\}$.
\end{thm*}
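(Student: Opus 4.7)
I would assume for contradiction that $i\alpha\in\sigma_p(A)$ with $\alpha\neq 0$ and fix a nonzero eigenvector $f$, so that $T(t)f=e^{i\alpha t}f$ for every $t\geq 0$. Positivity yields $|f|=|T(t)f|\leq T(t)|f|$; boundedness of $\mathscr{T}$ together with order continuity of the norm upgrades this sub\-invariance to genuine invariance $T(t)|f|=|f|$, via monotone convergence of the increasing norm-bounded net $(T(nt_0)|f|)_n$ and identification of its limit with $|f|$ (equivalently, via a mean-ergodic projection onto the fixed space). Irreducibility then promotes $|f|$ to a quasi-interior point of $E_+$, and Kakutani's representation identifies the principal ideal $E_{|f|}$ with a space $C(X)$ in which $|f|\leftrightarrow\mathbf{1}$ and $f\leftrightarrow h$ for some $h$ with $|h|=1$.

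The heart of the argument is a phase-rigidity deduction from $T(t_0)\geq K>0$. Since $T(t_0)-K\geq 0$ and $|Sg|\leq S|g|$ for every positive operator $S$, chaining twice yields
\[
|f|=|T(t_0)f|\leq |Kf|+|(T(t_0)-K)f|\leq K|f|+(T(t_0)-K)|f|=T(t_0)|f|=|f|,
\]
so that equality holds throughout and, in particular, $|Kf|=K|f|$. In the kernel case, writing $Kg=\int k(\cdot,y)g(y)\dx\mu(y)$, this identity is the pointwise statement that $y\mapsto k(x,y)h(y)$ has constant argument on the support of $k(x,\cdot)$ for almost every $x$; equivalently, $h$ is essentially constant along the fibres of $k$. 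For $K$ compact I would expect the paper to provide or invoke an analogous structural consequence of $|Kf|=K|f|$ available on Banach lattices with order continuous norm, and this is the step I regard as the main technical obstacle.

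It then remains to globalise: if $h$ were not a constant unimodular scalar, its level sets would generate a nontrivial closed $\mathscr{T}$-invariant ideal of $E$, contradicting irreducibility. Hence $h$ must be constant, $f$ is a scalar multiple of the fixed vector $|f|$, and therefore $Af=0$, forcing $\alpha=0$, which is the desired contradiction. The two delicate points I anticipate are (i) making the rigidity $|Kf|=K|f|$ structurally useful for merely \emph{compact} $K$ in an abstract Banach lattice, where no explicit kernel is available to run the pointwise argument, and (ii) performing the globalisation step cleanly when $K$ is only dominated by a single element $T(t_0)$ of the semigroup: the orbit structure under $\mathscr{T}$ has to supply the connectivity that in the kernel case is provided directly by the support of $k$, and this is presumably where the irreducibility hypothesis on the whole semigroup (rather than on $K$ alone) is decisive.
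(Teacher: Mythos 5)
Your opening reduction is sound and matches the paper's: from $\lvert f\rvert\le T(t)\lvert f\rvert$, boundedness and a strictly positive fixed functional of the adjoint semigroup (this is what actually identifies the limit of your increasing net with $\lvert f\rvert$ --- monotone convergence alone only yields some fixed point $g\ge\lvert f\rvert$, not $g=\lvert f\rvert$) one gets $T(t)\lvert f\rvert=\lvert f\rvert$, and irreducibility makes $e\coloneqq\lvert f\rvert$ quasi-interior with one-dimensional fixed space. The equality $\lvert Kf\rvert=K\lvert f\rvert$ is also a correct observation. But from there the argument has two genuine gaps. First, the compact case is simply not addressed: you say yourself that extracting a usable structural consequence from $\lvert Kf\rvert=K\lvert f\rvert$ for an abstract compact $K$ is the main technical obstacle, and no idea is offered for overcoming it. Since the paper reduces the kernel case to the compact case (Lemma \ref{lem:harriscompact}), not the other way around, this is the case that carries the theorem. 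Second, the globalisation step in the kernel case is incorrect as stated: the level sets of the unimodular phase $h$ do not generate closed $\mathscr{T}$-invariant ideals --- invariance of an ideal is a statement about supports, not phases, and $T(t)f=e^{i\alpha t}f$ rotates the phase rather than preserving it. The connectivity you hope the orbit structure will supply is exactly what must be proved, and the signum-operator machinery needed to do this on a general Banach lattice is absent from the sketch.

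The paper's actual route avoids phase analysis altogether. After the common reduction it applies the Jacobs--de Leeuw--Glicksberg splitting (Theorem \ref{thm:JdLG}) to obtain a strictly positive projection $Q$ whose range $F$ is a closed sublattice with order continuous norm on which the semigroup restricts to a bounded group of lattice isomorphisms; $T(t_0)_{\mid F}$ still dominates the non-trivial compact operator $QK_{\mid F}$. Arendt's theorem that on a diffuse lattice a compact operator is disjoint from every lattice isomorphism (Theorem \ref{thm:VKdisjoint}) then forces $F$ to contain an atom; atoms are fixed by bounded strongly continuous groups (Lemma \ref{lem:atomsfixed}), so $F=\lin\{e\}$, and a short weak-operator-topology argument places the eigenvector in $F_\C$, giving $\alpha=0$. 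To salvage your approach you would at minimum need a substitute for the kernel representation when $K$ is merely compact, and a rigorous argument that non-constancy of $h$ contradicts irreducibility \emph{given} the domination $T(t_0)\ge K$; neither is routine.
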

This result remains true for semigroups of Harris operators
which are merely Abel bounded, see Proposition~\ref{prop:trivialspectrumabel}.

We also consider discrete semigroups by which we mean a family
$\mathscr{T} = (T^n)_{n\in\N_0}$ for some bounded operator $T$. 
If $T$ is positive, irreducible and power bounded such that $T^m$ dominates a non-trivial compact
or kernel operator for some $m\in\N$, then 
\[ \sigma_p(T^n) \cap \Gamma \subseteq\{1\}\] for some $n\in\N$ where $\Gamma$ denotes the unit circle.
This result, presented as Theorem~\ref{thm:trivialspectrumdiscrete}, is optimal 
in the sense that, under these assumptions, one can neither expect $\sigma_p(T) \cap \Gamma\subseteq\{1\}$ for the point spectrum of $T$
nor $\sigma(T^n)\cap \Gamma\subseteq\{1\}$ for the spectrum of some operator $T^n$,
see Examples \ref{ex:matrix} and \ref{ex:fullspectrum}.

One important application of the above-mentioned results lies in the analysis of the 
asymptotic behavior of a semigroup.
Greiner deduced from his result that a positive and contractive strongly continuous semigroup on $L^p$ 
converges strongly to an equilibrium if it admits a strictly positive fixed point and contains a kernel operator
\cite[Kor.\ 3.11]{greiner1982}. See also \cite[Thm.\ 4.2]{arendt2008} for a proof in the irreducible case.
In Section~\ref{sec:convergence}, we generalize this to bounded and irreducible semigroups of Harris operators 
on Banach lattices with order continuous norm.

In addition, we obtain an analogous result for discrete semigroups in Theorem~\ref{thm:discreteconvergence}.
 If $T$ is a positive, power bounded and irreducible 
Harris operator with non-trivial fixed space, then there exists some $k\in\N$ such that
the subsequence $T^{nk}$ converges strongly as $n$ tends to infinity.
If $T$ is even strongly positive, i.e. $Tx$ is a quasi-interior point of $E_+$ for all $x>0$,
we show that the whole sequence $T^n$ converges strongly.

The main tools for our study of the asymptotic behavior are a so-called zero-two law by G. Greiner
\cite[Thm.\ 3.7]{greiner1982} and a result by D. Axmann \cite[Satz 3.5]{axmann1980} on disjointness of
powers of an irreducible operator.
In order to improve the accessibility of both results we present their proofs in the appendices.

\section{Notation and Tools}

Let us fix some notation and recall some well-known facts from the theory of 
Banach lattices and positive operators.
The reader is referred to \cite{meyer1991} for a more detailed introduction.

Throughout, $E$ denotes a (real) Banach lattice with order continuous norm. 
We write $E_+$ for the positive cone of $E$ and $x>0$ for an  element $x\in E_+$ different from zero.
For $y\in E_+$ we denote by 
	\[ E_y \coloneqq \{ x\in E : \vert x\vert \leq c y \text{ for some } c>0 \} = \bigcup_{c\geq 0}[-cy,cy] \]
the \emph{principal ideal} generated by $y$. 
If $E_y$ is dense in $E$, then $y$ is called a \emph{quasi-interior point of $E_+$}.
We say that a positive functional $x' \in E'$ is \emph{strictly positive} if
$\applied{x'}{\vert x \vert} >0$ for all $x\neq 0$.
By $\fix(T) \coloneqq \ker(I-T)$ we denote the \emph{fixed space} of a linear operator $T:E\to E$.

\subsection{Positive Discrete and Strongly Continuous Semigroups}
For a convenient simultaneous dealing with discrete and strongly continuous semigroups
we use the following definition.
\begin{definition}
	Let $R=\N_0$ or $R = [0,\infty)$. 
	A family $\mathscr{T}=(T(t))_{t\in R}$ of bounded linear operators on $E$
	is called a \emph{semigroup on $E$}
	if $T(t+s)=T(t)T(s)$ for all $t, s \in R$. 
	A semigroup $\mathscr{T}=(T(t))_{t\in [0,\infty)}$ is called
	\emph{strongly continuous} if the mappings $t\mapsto T(t)x$ are continuous for all $x\in E$.
	A semigroup $\mathscr{T}=(T(t))_{t\in \N_0}$ is said to be \emph{discrete}. 

	A semigroup $\mathscr{T}$ is called \emph{positive} if $T(t)$ is positive for all $t\in R$
	and it is called \emph{bounded} if $\sup_{t\in R} \Vert T(t) \Vert < \infty$.

	Given a semigroup $\mathscr{T}=(T(t))_{t\in R}$, 
	a subset $A\subseteq E$ is said to be \emph{$\mathscr{T}$-invariant} if $T(t)A\subseteq A$ for all $t\in R$.
	The semigroup $\mathscr{T}$ is called \emph{irreducible} if there are no
	closed $\mathscr{T}$-invariant ideals in $E$ beside $\{0\}$ and $E$.
	By
	\[ \fix(\mathscr{T}) \coloneqq \bigcap_{t\in R} \ker(I-T(t)) \]
	we denote the \emph{fixed space} of a semigroup $\mathscr{T}$.
\end{definition}

It will be convenient to call a single operator $T$ \emph{irreducible} (\emph{power bounded}) if
the discrete semigroup $(T^n)_{n\in\N_0}$ is irreducible (bounded).

By $E_\C$ we denote the complexification of $E$, 
which is called a complex Banach lattice (see \cite[Sec.\ 2.2]{meyer1991}),
and by $T_\C : E_\C \to E_\C$ the complexification of a linear operator $T$.
If $\mathscr{T}=(T(t))_{t\in[0,\infty)}$ is a strongly continuous semigroup on $E$ 
with generator $A$, then so is 
$\mathscr{T}_\C=(T(t)_\C)_{t\in[0,\infty)}$ with generator $A_\C$.

	We recall that a strongly continuous semigroup with generator $A$ is
	called \emph{Abel bounded} if $\Real \lambda \leq 0$ for all $\lambda \in \sigma(A_\C)$
	and $\sup_{\lambda>0} \Vert \lambda (\lambda-A_\C)^{-1}\Vert < \infty$.
	A discrete semigroup $\mathscr{T}=(T^n)_{n\in\N_0}$ is called \emph{Abel bounded}
	if $\vert \lambda\vert \leq 1$ for all $\lambda\in\sigma(T_\C)$ and 
	$\sup_{\lambda > 1} \Vert (\lambda-1)(\lambda-T_\C)^{-1}\Vert < \infty$.

\begin{lemma}
	\label{lem:fixedpointadjoint}
	Let $\mathscr{T}=(T(t))_{t\in R}$ be a positive and Abel bounded semigroup on $E$ 
	which is strongly continuous or discrete.  Assume that
	$T(t)z\geq z$ for some $z\in E$, $z>0$, and all $t\in R$. Then there exists $0<x'\in \fix(\mathscr{T}')$.

	If in addition $\mathscr{T}$ is irreducible, $x'$ is strictly positive and $z$ is a quasi-interior point of $E_+$
	with $\fix(\mathscr{T}) = \lin\{z\}$.
\end{lemma}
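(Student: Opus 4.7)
The plan is to produce $x'$ as a weak-$*$ cluster point of Abel means of a suitable starting functional, and then to extract every statement in the irreducible case from the resulting strict positivity of $x'$ combined with standard closed-ideal arguments.

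For the first assertion, pick $y' \in E'_+$ with $\applied{y'}{z} > 0$ by Hahn--Banach, and set $S_\mu := \mu R(\mu, A_\C)$ for $\mu > 0$ in the continuous case, respectively $S_\mu := (\mu - 1) R(\mu, T_\C)$ for $\mu > 1$ in the discrete case. By Abel boundedness these operators are positive with $\|S_\mu\| \leq M$ uniformly. The crucial super-invariance $S_\mu z \geq z$ follows from $T(t) z \geq z$ for all $t \in R$: in the discrete case by expanding $S_\mu$ as a geometric series in $T$, and in the continuous case from the Laplace representation of $R(\mu, A)$ for $\mu$ beyond the growth bound, extended to all $\mu > 0$ via analyticity in $\mu$ and closedness of $E_+$. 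Consequently the net $S_\mu' y'$ is norm-bounded with $\applied{S_\mu' y'}{z} \geq \applied{y'}{z} > 0$. In the discrete case one has $(T - I) S_\mu = (\mu - 1)(S_\mu - I)$, of norm $O(\mu - 1)$ as $\mu \to 1^+$; an analogous computation handles each $T(t)$ in the continuous case. Any weak-$*$ cluster point $x'$ of the net is thus positive, nontrivial because $\applied{x'}{z} > 0$, and satisfies $T(t)' x' = x'$ for every $t \in R$.

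Now assume $\mathscr{T}$ is irreducible. The closed ideal $N := \{ x \in E : \applied{x'}{|x|} = 0 \}$ is $\mathscr{T}$-invariant, since $\applied{x'}{|T(t) x|} \leq \applied{x'}{T(t)|x|} = \applied{x'}{|x|}$, and does not contain $z$; irreducibility forces $N = \{0\}$, so $x'$ is strictly positive. Applied to the non-negative element $T(t)z - z$, whose $x'$-value vanishes, this yields $T(t)z = z$, so $z \in \fix(\mathscr{T})$. Then $\overline{E_z}$ is a closed $\mathscr{T}$-invariant ideal containing $z$ (invariance uses $|x| \leq cz \Rightarrow |T(t)x| \leq c T(t) z = cz$), hence equals $E$, so $z$ is a quasi-interior point of $E_+$.

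The same strict-positivity trick makes $\fix(\mathscr{T})$ a sublattice: if $w \in \fix(\mathscr{T})$, then $T(t)|w| \geq |w|$ and $\applied{x'}{T(t)|w| - |w|} = 0$ force $T(t)|w| = |w|$. Given $w \in \fix(\mathscr{T})$, set $c := \applied{x'}{w}/\applied{x'}{z}$ and $u := w - cz \in \fix(\mathscr{T})$ with $\applied{x'}{u} = 0$. Decompose $u = u^+ - u^-$ with $u^\pm \in \fix(\mathscr{T})$ and $u^+ \wedge u^- = 0$: if both were nonzero, $\overline{E_{u^+}}$ would be a closed $\mathscr{T}$-invariant ideal (invariance from $T(t) u^+ = u^+$) that is proper, since $u^- \in \overline{E_{u^+}}^d \setminus \{0\}$, contradicting irreducibility. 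Hence $u$ has a definite sign, and strict positivity of $x'$ combined with $\applied{x'}{u} = 0$ forces $u = 0$, i.e.\ $w = cz$. The main obstacle I anticipate is the super-invariance $S_\mu z \geq z$ under mere Abel boundedness rather than norm boundedness of $\mathscr{T}$, since the orbit of $z$ need not be norm-bounded and the Laplace representation of the resolvent is not directly available on the full admissible range; everything else is closed-ideal bookkeeping powered by the strict positivity of $x'$.
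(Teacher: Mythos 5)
Your argument is correct in substance and follows essentially the same route as the paper: the existence part is exactly the Abel-mean construction that the paper outsources to \cite[Lem.\ V 4.8]{schaefer1974} and \cite[Prop.\ 4.3.6]{arendt2001}, and the irreducible part (absolute null ideal of $x'$, the identity $\applied{x'}{T(t)z-z}=0$, sublattice property of $\fix(\mathscr{T})$, disjointness of $u^+$ and $u^-$) matches the paper's proof step for step. Your endgame is marginally more self-contained: instead of invoking that a totally ordered vector lattice is one-dimensional, you normalize $w-cz$ against $x'$ and kill it directly; that is a fine variant.

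The one place where your write-up does not close as stated is the step you yourself flag: extending $\mu R(\mu,A_\C)z\geq z$ from $\mu>\omega_0$ to all $\mu>0$ ``via analyticity in $\mu$ and closedness of $E_+$''. Analytic continuation does not preserve order inequalities, so this phrase is not an argument. The correct mechanism is the Taylor expansion of the resolvent: for $0<\mu<\mu_0$ with $\mu_0-\mu<\Vert R(\mu_0,A)\Vert^{-1}$ one has $R(\mu,A)=\sum_{n\geq 0}(\mu_0-\mu)^nR(\mu_0,A)^{n+1}$ with nonnegative scalar coefficients; since $R(\mu_0,A)\geq 0$ and $R(\mu_0,A)z\geq z/\mu_0$ imply inductively $R(\mu_0,A)^{n+1}z\geq z/\mu_0^{n+1}$, comparison with $1/\mu=\sum_{n\geq 0}(\mu_0-\mu)^n/\mu_0^{n+1}$ gives $\mu R(\mu,A)z\geq z$ termwise, and a connectedness argument propagates this from $(\omega_0,\infty)$ down to $(s(A),\infty)\supseteq(0,\infty)$ (the resolvent exists there by the Abel boundedness hypothesis). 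With that substitution the proof is complete; the discrete case via the Neumann series, the estimate $\Vert(T(t)'-I)S_\mu'y'\Vert\to 0$, and the weak-$*$ compactness argument are all sound as written.
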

\begin{proof}
	The existence of $0<x' \in \fix(\mathscr{T}')$ follows from \cite[Lem.\ V 4.8]{schaefer1974} in the discrete case
	and from the proof of \cite[Prop.\ 4.3.6]{arendt2001} for a strongly continuous semigroup, respectively.

	Now, assume that $\mathscr{T}$ is irreducible. Since the absolute null ideal
	\[N(x') \coloneqq \{ x \in E : \applied{x'}{\vert x\vert} = 0 \}\neq E\]
	is closed and $\mathscr{T}$-invariant,
	it follows that $N(x')=\{0\}$, i.e.\ $x'$ is strictly positive.
	Thus, $\applied{x'}{T(t)z - z}=0$ for all $t\in R$ implies that $z\in\fix(\mathscr{T})$. 
	By the same argument, we observe that
	$T(t)\vert x\vert=\vert x\vert$ for all $t\in R$ and $x\in \fix(\mathscr{T})$, 
	i.e.\ $\fix(\mathscr{T})$ is a sublattice of $E$.

	Let $x\in \fix(\mathscr{T})$.
	Then $x^+\coloneqq x\vee 0\in\fix(\mathscr{T})$ and $x^-\coloneqq x-x^+ \in \fix(\mathscr{T})$ and the principal 
	ideals generated by $x^+$ and $x^-$ are $\mathscr{T}$-invariant
	and disjoint. The irreducibility of $\mathscr{T}$ implies that either $x^+$ is a
	quasi-interior point of $E_+$ and $x^-=0$ or vice versa.
	Consequently, $\fix(\mathscr{T})$ is totally ordered and hence one-dimensional by \cite[Prop.\ II 3.4]{schaefer1974},
	i.e.\  $\fix(\mathscr{T}) = \lin\{z\}$.
\end{proof}

Next, we note a consequence of the well-known mean ergodic theorem.

\begin{lemma}
	\label{lem:meanergodic}
	Let $T:E\to E$ be a positive and Abel bounded linear operator.
	If there exists a quasi-interior point $e\in \fix(T)$ of $E_+$, then $T$ is mean ergodic, i.e.\
	\[E = \fix(T) \oplus \overline{(I-T)E}.\]
\end{lemma}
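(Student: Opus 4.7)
The plan is to invoke a standard mean ergodic criterion via the Abel averages $A(\lambda) \coloneqq (\lambda-1)(\lambda-T)^{-1}$ for $\lambda > 1$. Since $T$ is positive and $\sigma(T)\subseteq\overline{\mathds D}$ (from Abel boundedness), each $A(\lambda)$ is positive, and from $Te=e$ we get $A(\lambda)e = e$ for every $\lambda>1$. Moreover, $\sup_{\lambda>1}\|A(\lambda)\|<\infty$ by assumption.

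First I would handle vectors $x$ in the principal ideal $E_e$. For such an $x$ with $|x|\leq c e$, positivity gives
\[ |A(\lambda)x| \leq A(\lambda)|x| \leq c\,A(\lambda)e = c e, \]
so the net $(A(\lambda)x)_{\lambda>1}$ stays inside the order interval $[-ce,ce]$. Because $E$ has order continuous norm, order intervals are weakly compact, hence there is a weak cluster point $y$ of $A(\lambda)x$ as $\lambda\to 1+$. A short computation using the resolvent identity $(\lambda-T)^{-1}(I-T)=I-A(\lambda)$ shows $(I-T)A(\lambda)x=(\lambda-1)(x-A(\lambda)x)\to 0$, whence $Ty=y$; on the other hand $x-A(\lambda)x\in (I-T)E$, so passing to the weak limit along the cluster point and using that $(I-T)E$ is weakly, hence norm, closed in its closure yields $x - y \in \overline{(I-T)E}$. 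Thus $E_e \subseteq \fix(T)+\overline{(I-T)E}$.

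To extend to all of $E$, I would use that $e$ is quasi-interior, so $E_e$ is dense in $E$, together with the fact that $\fix(T)+\overline{(I-T)E}$ is norm-closed. The latter follows from Abel boundedness: if $x_n = f_n + g_n$ with $f_n\in\fix(T)$, $g_n\in\overline{(I-T)E}$, and $x_n\to x$, then applying $A(\lambda)$ gives $A(\lambda)x_n = f_n + A(\lambda)g_n$ with $\|A(\lambda)g_n\|\to 0$ as $\lambda\to 1+$ uniformly in $n$ (by a standard $\eps$/$3$ argument using $\|A(\lambda)\|\leq M$ and the definition of $\overline{(I-T)E}$), which forces $(f_n)$ to be Cauchy; the limit $f$ lies in $\fix(T)$ and $x-f\in\overline{(I-T)E}$.

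The sum is direct: if $y\in\fix(T)\cap\overline{(I-T)E}$, then $A(\lambda)y=y$ for all $\lambda>1$, while $A(\lambda)z\to 0$ as $\lambda\to 1+$ for every $z\in\overline{(I-T)E}$ (again by the $\eps/3$ argument, since $A(\lambda)(I-T)=(\lambda-1)(I-A(\lambda))\to 0$ pointwise), giving $y=0$. The main obstacle, and the step that justifies working with $E_e$ at all, is the order-interval estimate combined with weak compactness of order intervals in order continuous Banach lattices; everything else is the standard mechanism of the mean ergodic theorem adapted to Abel means.
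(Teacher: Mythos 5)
Your argument is correct in substance but follows a genuinely different route from the paper's. The paper works with the Ces\`aro averages $C(n)=\frac{1}{n}\sum_{k=0}^{n-1}T^k$: it first invokes a (nontrivial) result of \'Emilion that Abel boundedness implies $\sup_n\|C(n)\|<\infty$, deduces $\frac{1}{n}T^nx\to0$, uses the $C(n)$-invariance of $[-ce,ce]$ together with weak compactness of order intervals to produce weak cluster points for $x\in E_e$, and then cites Krengel's mean ergodic theorem to get convergence of $C(n)x$ and the splitting. You instead work directly with the Abel means $A(\lambda)=(\lambda-1)(\lambda-T)^{-1}$, which are positive, satisfy $A(\lambda)e=e$, and are uniformly bounded by the very definition of Abel boundedness adopted in the paper; the identity $(I-T)(\lambda-T)^{-1}=I-A(\lambda)$ then yields both $(I-T)y=0$ for any weak cluster point $y$ and $x-A(\lambda)x\in(I-T)E$. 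This buys a more self-contained proof: no appeal to the Abel-implies-Ces\`aro boundedness result and no external mean ergodic theorem, at the price of verifying by hand that $\fix(T)+\overline{(I-T)E}$ is closed and that the sum is direct. The crucial input is identical in both proofs: the order interval $[-ce,ce]$ is invariant under the averaging operators because $e$ is a fixed quasi-interior point, and is weakly compact because the norm is order continuous; this is exactly what makes the restriction to $E_e$, followed by density, work in either version.

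One small repair in your closedness step: the claim that $\|A(\lambda)g_n\|\to0$ \emph{uniformly in $n$} is neither available (the $g_n$ are not known to be bounded at that stage --- their boundedness is essentially what is being proved) nor needed. For fixed $n,m$ write $A(\lambda)(x_n-x_m)=(f_n-f_m)+A(\lambda)(g_n-g_m)$ and let $\lambda\to1^+$; since $A(\lambda)$ tends to zero pointwise on $\overline{(I-T)E}$ by the $\eps/3$ argument you mention, this gives $\|f_n-f_m\|\le M\|x_n-x_m\|$ with $M\coloneqq\sup_{\lambda>1}\|A(\lambda)\|$, so $(f_n)$ is Cauchy and the rest of your argument goes through verbatim.
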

\begin{proof}
	For $n\in\N$ and $x\in E$ let $C(n)x \coloneqq \frac{1}{n} \sum_{k=0}^{n-1} T^k x$
	denote the Ces\`aro averages of $T$.
	It is well-known that the Abel boundedness of $T$ implies that 
	$M\coloneqq \sup_{n\in\N} \lVert C(n) \rVert < \infty$, see \cite[1.5]{emilion1985}.
	Hence,
	\[ \biggl\lVert \frac{T^n}{n} \biggr\rVert \leq 2 \biggl\lVert \frac{T^n}{n+1} \biggr\rVert 
	\leq 2 \lVert C(n+1)\rVert \leq 2M  \]
	for all $n\in\N$.
	Now, it follows from $\lim \frac{1}{n} T^n x = 0$ for all $x\in E_e$ that $\lim \frac{1}{n} T^n x = 0$ for all $x\in E$.
	
	Due to the order continuity of the norm on $E$, for every $c>0$
	the $C(n)$-invariant order interval $[-ce,ce]$ is $\sigma(E,E')$-compact,
	see \cite[Thm.\ 2.4.2]{meyer1991}. Thus, 
	the sequence $(C(n)x)$ has a weak cluster point for every $x\in E_e$ and 
	the mean ergodic theorem \cite[\S2 Thm.\ 1.1]{krengel1985} yields that
	$\lim C(n)x$ exists for all $x\in E_e$.  Consequently, by the uniform boundedness of the operators
	$C(n)$, $\lim C(n)x$ exists for all $x\in E$.
	The desired ergodic decomposition $E= \fix(T) \oplus \overline{(I-T)E}$ now follows from 
	\cite[\S2 Thm.\ 1.3]{krengel1985}.
\end{proof}

We will use the following version of the famous splitting theorem by Jacobs, de Leeuw and Glicksberg.
Let $\mathscr{L}_\sigma(E)$ denote the space of all bounded linear operator on $E$ endowed with the weak operator
topology.

\begin{theorem}[Jacobs -- de Leeuw -- Glicksberg]
	\label{thm:JdLG}
	Let $\mathscr{T} = (T(t))_{t\in R}$ be a positive and bounded semigroup on $E$ and
	denote by $\mathscr{S}$ the closure of $\{ T(t) : t \in R \}$ in $\mathscr{L}_\sigma(E)$.
	If there exists a quasi-interior point $e\in \fix(\mathscr{T})$ of $E_+$,
	then $\mathscr{S}$ contains a positive projection $Q$ commuting with every
	operator in $\mathscr{S}$ such that the closed subspace $F\coloneqq QE$ includes $\fix(\mathscr{T})$
	and $Q\mathscr{S} \coloneqq \{ QT : T\in\mathscr{S} \} \subseteq \mathscr{S}$ is a norm bounded group of positive operators
	with neutral element $Q$.

	If in addition $\mathscr{T}$ is irreducible, $Q$ is strictly positive in the sense that $Qy >0$ 
	for all $y>0$.
\end{theorem}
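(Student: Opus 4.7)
The plan is to realize $\mathscr{S}$ as a compact semitopological semigroup in the weak operator topology and to obtain $Q$ as the neutral element of its Sushkevich kernel. The decisive preparatory step is to show that every orbit $\{T(t)x : t\in R\}$ is relatively weakly compact in $E$. For $x \in [-ce,ce]$ this follows immediately from positivity together with $T(t)e=e$: the order interval $[-ce,ce]$ is $\mathscr{T}$-invariant, and by order continuity of the norm it is $\sigma(E,E')$-compact (\cite[Thm.~2.4.2]{meyer1991}). A three-$\varepsilon$ argument using the density of $E_e$ in $E$ and the norm boundedness of $\mathscr{T}$ extends relative weak compactness to all $x \in E$. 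By Tychonoff, $\mathscr{S}$ is then compact in $\mathscr{L}_\sigma(E)$, and since the $T(t)$ commute, $\mathscr{S}$ is a compact \emph{abelian} semitopological semigroup.

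Next I would invoke the structure theorem for compact abelian semitopological semigroups: the minimal (Sushkevich) ideal $K(\mathscr{S})$ is a group, whose neutral element $Q$ is an idempotent commuting with every element of $\mathscr{S}$. Writing $Q$ as a WOT-limit of some net $(T(t_\alpha))$ shows that $Q$ is positive, and for $x\in\fix(\mathscr{T})$ it yields $Qx = \lim T(t_\alpha)x = x$, so $F \coloneqq QE \supseteq \fix(\mathscr{T})$. In any compact abelian semigroup one has $K(\mathscr{S}) = Q\mathscr{S}$: the inclusion $Q\mathscr{S} \subseteq K(\mathscr{S})$ holds because $K(\mathscr{S})$ is an ideal containing $Q$, and the reverse inclusion follows from $Q$ acting as identity on $K(\mathscr{S})$. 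Hence $Q\mathscr{S}$ is a norm bounded group of positive operators with neutral element $Q$.

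For the last assertion, assume that $\mathscr{T}$ is irreducible and consider the absolute null set
\[ N \coloneqq \{ x \in E : Q\vert x\vert = 0 \}. \]
Since $Q$ is positive and continuous, $N$ is a closed ideal. It is $\mathscr{T}$-invariant: if $x \in N$ and $t \in R$, then $0 \leq Q\vert T(t)x\vert \leq QT(t)\vert x\vert = T(t)Q\vert x\vert = 0$, using that $Q$ commutes with $T(t)$. Because $Qe = e \neq 0$ we have $e \notin N$, so $N \neq E$, and irreducibility forces $N = \{0\}$. Thus $Qy>0$ for every $y>0$.

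The principal obstacle is the first step: producing weakly compact orbits from the existence of a quasi-interior fixed vector. Once this is in hand, the remaining assertions reduce to standard Jacobs--de Leeuw--Glicksberg kernel theory and routine manipulations with closed invariant ideals.
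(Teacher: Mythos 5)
Your argument is correct and follows essentially the same route as the paper: relative weak compactness of orbits in $[-ce,ce]$ via order continuity of the norm, extension to all of $E$ by density of $E_e$ and boundedness, the Jacobs--de Leeuw--Glicksberg/Sushkevich kernel of the resulting compact abelian semitopological semigroup to produce $Q$, and the closed $\mathscr{T}$-invariant ideal $\{x : Q\lvert x\rvert = 0\}$ for strict positivity under irreducibility. The only differences are presentational (you spell out the kernel-theoretic steps that the paper delegates to Krengel and to \cite[Cor.~A~5]{nagel2000}).
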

\begin{proof}
	It follows from the uniform boundedness principle 
	that the closure of a norm bounded set in $\mathscr{L}_\sigma(E)$ is bounded in norm, again.
	Moreover, the operators of $\mathscr{S}$ are positive and they commute since $\mathscr{T}$ is Abelian.

	Now, let $x\in E_e$, i.e.\ $x\in [-ce,ce]$ for some $c>0$. Then $\{ T(t)x : t\in R\}$ is a subset of $[-ce,ce]$ and hence,
	by the order continuity of the norm \cite[Thm.\ 2.4.2]{meyer1991}, relatively weakly compact. 
	Since $E_e$ is dense in $E$, $\{ T(t) : t\in R\}$ is relatively compact in $\mathscr{L}_\sigma(E)$ \cite[Cor.\ A 5]{nagel2000}.
	Now, it follows from \cite[\S2 Thm 4.1]{krengel1985} that there exists a (positive) projection $Q\in\mathscr{S}$ 
	that commutes with every operator in $\mathscr{S}$ and $Q\mathscr{S}$ is a (bounded) group with neutral element $Q$.
	Since $Q$ is in the $\mathscr{L}_\sigma(E)$-closure of $\mathscr{T}$, one has that $\fix(\mathscr{T})\subseteq\fix(Q) \subseteq F$.

	If in addition $\mathscr{T}$ is irreducible, then $Q$ is strictly positive because the closed ideal
	\[ N(Q) \coloneqq \{ y \in E : Q \vert y\vert =0 \} \]
	is $\mathscr{T}$-invariant and distinct from $E$.
\end{proof}

\subsection{Atoms}

We recall that an  element $a\in E_+$ is said to be an \emph{atom (of $E$)} if the generated ideal 
\[E_a \coloneqq \{ x\in E : \vert x\vert \leq ca \text{ for some } c>0 \} \]
is one-dimensional. If $E$ contains no atoms, $E$ is said to be \emph{diffuse}.

\begin{rem}
\label{rem:atomstoatoms}
Let $a\in E_+$ be an atom and $T: E\to E$ be positive. Then clearly $TE_a \subseteq E_{Ta}$.
If in addition $T$ is invertible with a positive inverse, i.e.\ $T$ is a lattice isomorphism, 
then \[ E_{Ta} = TT^{-1} E_{Ta} \subseteq T E_{T^{-1}Ta} = TE_a = \lin\{Ta\}. \]
Hence, every lattice isomorphism on $E$ maps atoms to atoms.
\end{rem}

The lemma below is a more general formulation of \cite[Prop.\ 3.5]{keicher2006}, 
followed by an analogous assertion in the discrete case.

\begin{lemma}
	\label{lem:atomsfixed}
	Let $\mathscr{T}=(T(t))_{t\in \R}$ be a positive and bounded strongly continuous group
	on $E$, i.e.\ $\mathscr{T}$ is a uniformly bounded family of positive linear operators on $E$ such
	that $T(0)=I$, $T(t)T(s)=T(t+s)$ for all $t,s\in\R$, and the mappings $t\mapsto T(t)x$ are continuous 
	from $\R$ to $E$ for every $x\in E$.
	Then $\fix(\mathscr{T})$ contains every atom of $E$.
\end{lemma}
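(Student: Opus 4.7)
The plan is to show that the orbit of an atom under $\mathscr{T}$ stays in the one-dimensional ideal it generates, and that boundedness of $\mathscr{T}$ forces the resulting scalar multiplier to be identically~$1$.

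First I would observe that every $T(t)$ is a lattice isomorphism: it is positive with positive inverse $T(-t)$. Hence, by Remark~\ref{rem:atomstoatoms}, $T(t)a$ is again an atom whenever $a\in E_+$ is. A standard computation shows that any two atoms $a,b\in E_+$ of a Banach lattice are either disjoint or positive scalar multiples of one another: the element $a\wedge b$ lies simultaneously in the one-dimensional ideals $E_a$ and $E_b$, so either $a\wedge b=0$ or both $a$ and $b$ are positive multiples of $a\wedge b$.

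Next, I would use the strong continuity of $\mathscr{T}$ to exclude disjointness for small $\lvert t\rvert$. If $T(t)a \wedge a = 0$, then $\lvert T(t)a - a\rvert = T(t)a + a$, and hence $\lVert T(t)a - a\rVert \geq \lVert a\rVert$, contradicting $T(t)a\to a$ as $t\to 0$. Therefore there is an open neighbourhood $U$ of $0$ in $\R$ on which $T(t)a = \lambda(t)\,a$ for some $\lambda(t)>0$.

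Third, I would extend this to all of $\R$ by the group property: any $t\in\R$ can be written as $t_1+\cdots+t_n$ with each $t_i\in U$, and then $T(t)a = \lambda(t_1)\cdots\lambda(t_n)\,a$. Fixing a functional $\phi\in E'$ with $\applied{\phi}{a}\neq 0$, the formula $\lambda(t) = \applied{\phi}{T(t)a}/\applied{\phi}{a}$ shows $\lambda:\R\to(0,\infty)$ is continuous. Since it is also a multiplicative group homomorphism, it must be of the form $\lambda(t) = e^{\alpha t}$. Boundedness of $\mathscr{T}$ yields $\lambda(t)\lVert a\rVert = \lVert T(t)a\rVert\leq M\lVert a\rVert$ for all $t\in\R$, which forces $\alpha = 0$ and therefore $T(t)a = a$ for every $t\in\R$.

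The only non-routine step is really the first one: combining the atom-to-atom dichotomy with the quantitative estimate $\lVert T(t)a - a\rVert \geq \lVert a\rVert$ in the disjoint case. Once this is established, extension to all of $\R$ via the group law and elimination of $\alpha$ via boundedness are essentially bookkeeping.
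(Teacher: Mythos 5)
Your proposal is correct and follows essentially the same route as the paper's proof: lattice isomorphisms map atoms to atoms, disjointness of $T(t)a$ from $a$ would force $\lVert T(t)a-a\rVert\geq\lVert a\rVert$ and is thus excluded near $t=0$ by strong continuity, the group law propagates $T(t)a\in E_a$ to all $t$ and yields $T(t)a=e^{\alpha t}a$, and boundedness kills $\alpha$. You merely spell out more explicitly the atom dichotomy and the extension from a neighbourhood of $0$ to all of $\R$, which the paper leaves implicit.
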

\begin{proof}
	Since $\mathscr{T}$ is a group, every operator $T(t)$ is a lattice isomorphism and hence
	maps atoms to atoms by Remark \ref{rem:atomstoatoms}.
	Fix an atom $a\in E_+$. Then for every  disjoint atom $b \in E_+$ one has that
	$\lvert a-b \rvert =  a+ b$ and hence $\lVert a-b \rVert \geq \lVert a\rVert$.
	Thus, it follows from $\lim_{t\to 0} T(t)a  = a$ that $T(t)a \in E_a$ for all $t\in \R$.
	Therefore, $T(t)a =\exp(\lambda t)a$ for a $\lambda\in \R$ and all $t\in \R$ 
	by the semigroup law.
	Since the group $\mathscr{T}$ is bounded, we conclude that $\lambda = 0$ and hence $a\in \fix(\mathscr{T})$.
\end{proof}

\begin{lemma}
	\label{lem:atomsfixeddiscrete}
	Suppose that $E$ has atoms and let $T$ be an irreducible lattice isomorphism on $E$ such that
	\begin{align}
		\sup \{ \Vert T^{k} \Vert : k\in \Z \} < \infty. \label{eqn:sgrbdd}
	\end{align}
	Then there exists $n\in\N$ such that $T^n=I$.
\end{lemma}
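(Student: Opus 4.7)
My plan is to exploit the atom structure together with the group property. Since $E$ has atoms by assumption, fix an atom $a\in E_+$. By Remark~\ref{rem:atomstoatoms} every $T^n$, $n\in\Z$, is a lattice isomorphism and hence maps atoms to atoms, so each $a_n\coloneqq T^n a$ is again an atom of $E$. I would then analyse the subgroup
\[ H \coloneqq \{n\in\Z : T^n a\in\R_+ a\} \]
of $\Z$, which is either $\{0\}$ or of the form $k\Z$ with $k\in\N$, and aim to show $T^k=I$ in the latter case.

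The first step is to rule out $H=\{0\}$. Since any two atoms are either proportional or disjoint, in this case the family $\{a_n : n\in\Z\}$ would consist of pairwise disjoint atoms. Then the closed ideal $J$ generated by $\{a_n : n\geq 0\}$ is $T$-invariant and non-zero, hence $J=E$ by irreducibility. On the other hand, every element of the (non-closed) ideal generated by $\{a_n : n\geq 0\}$ has modulus dominated by a finite positive combination of these atoms and is therefore disjoint from $a_{-1}$; since disjointness is preserved under norm limits, no element of $J$ can equal $a_{-1}$, contradicting $a_{-1}\in E=J$. So $H=k\Z$ for some $k\in\N$, and writing $T^k a=ca$ with $c>0$, boundedness of the two-sided sequence $(T^{nk})_{n\in\Z}$ forces $c=1$, i.e.\ $T^k a=a$.

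The final step is then almost immediate. The atoms $a_0,\dots,a_{k-1}$ are pairwise disjoint (if $a_i$ and $a_j$ with $0\leq i<j<k$ were proportional, then $j-i$ would lie in $H\cap(0,k)=\emptyset$), so the ideal they generate coincides with the finite-dimensional subspace $\lin\{a_0,\dots,a_{k-1}\}$; this subspace is automatically closed, $T$-invariant since $T$ cyclically permutes the $a_i$ (note $Ta_{k-1}=T^k a=a_0$), and non-zero, hence equal to $E$ by irreducibility. Then $T^k a_i=T^i(T^k a)=T^i a=a_i$ for each $i$, so $T^k$ is the identity on a basis and $T^k=I$.

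I expect the main obstacle to be ruling out $H=\{0\}$: the contradiction hinges on disjointness being preserved under norm limits, which I would verify via the identity $\lvert x-a_{-1}\rvert=\lvert x\rvert+a_{-1}$ for $x\perp a_{-1}$ combined with monotonicity of the norm to deduce $\lVert x-a_{-1}\rVert\geq \lVert a_{-1}\rVert$ for every $x$ in the non-closed ideal, and thus also in its closure.
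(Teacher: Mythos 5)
Your proof is correct and follows essentially the same route as the paper: show that some negative power of $T$ applied to the atom $a$ cannot be disjoint from the forward orbit $\{T^m a\}_{m\geq 0}$ (since that orbit generates a dense, $T$-invariant ideal), deduce $T^k a = ca$ and then $c=1$ from the two-sided boundedness, and finally identify $E$ with the span of the finite orbit. The only cosmetic difference is that you establish the non-disjointness via the direct estimate $\lVert x-a_{-1}\rVert\geq\lVert a_{-1}\rVert$ for $x\perp a_{-1}$, whereas the paper invokes $\overline{J}=J^{\bot\bot}$ using order continuity of the norm; both arguments express the same fact that an element disjoint from an ideal cannot lie in its closure.
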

\begin{proof}
	Let $a\in E_+$ be an atom.
	The ideal
	\[ J \coloneqq \{ x\in E : \vert x\vert \leq c (a+ Ta + \dots + T^m a) \text{ for some } c\geq 0,\, m\in\N\}\]
	is $T$-invariant and hence $\overline{J}=E$. 
	Since the norm on $E$ is order continuous, $\overline{J}$ equals $J^{\bot\bot}$, the band
	generated by $J$. Thus, for every $x>0$ 
	we find some $n\in\N_0$ such that $x\wedge T^n a >0$.
	In particular, $T^{-1}a\wedge T^{n-1} a >0$ for some $n\in\N$. As $T^{-1}$ maps atoms to atoms by Remark \ref{rem:atomstoatoms},
	this implies that $T^{n-1} a = cT^{-1}a$ for some $c>0$ and 
	$T^n a  = ca$. Now, it follows from assumption \eqref{eqn:sgrbdd} that $c=1$. Therefore, $T^ka \in \fix(T^n)$ for all $k\in\N_0$.
	Since every $T^k a$ is an atom, we conclude that
	\[ E= \overline{J} = \lin\{a,Ta,\dots,T^{n-1}a\} \subseteq \fix(T^n), \]
	which completes the proof.
\end{proof}

\subsection{Kernel and Harris Operators}
First, we recall 
that a linear operator $T:E\to E$ is called \emph{regular} if it is the difference of two positive operators.
Every regular operator is bounded \cite[Prop.\ 1.3.5]{meyer1991} and admits a modulus. Moreover, 
since $E$ is order complete, the regular operators on $E$ form a Banach lattice 
with respect to the regular norm $\lVert T\rVert_r \coloneqq \lVert \lvert T \rVert \rVert$ \cite[Prop.\ 1.3.6]{meyer1991}.

	We denote by $E'\otimes E$ the space of all finite rank operators on $E$.
	The elements of $(E'\otimes E)^{\bot \bot}$, the band generated by $E'\otimes E$
	in the space of all regular operators, are called \emph{(regular) kernel operators}. 
	A regular operator $T$ on $E$ is called a \emph{Harris operator} 
	if $T^n \not\in (E'\otimes E)^{\bot}$ for some $n\in\N$.

Note that a positive operator is a Harris operator if and only if some power 
dominates a non-trivial kernel operator. 
In the case where $E=L^p$, the band $(E'\otimes E)^{\bot\bot}$ consists precisely of those operators given
by a measurable kernel (see \cite[Prop.\ IV 9.8]{schaefer1974} or \cite[Thm.\ 3.3.7]{meyer1991}).

The following well-known lemma states that the kernel operators form an algebra ideal in the regular operators.

\begin{lemma}
	\label{lem:kernelprop}
	Let $K$ be a kernel operator. Then $KT \in (E'\otimes E)^{\bot\bot}$ 
		and $TK \in (E'\otimes E)^{\bot\bot}$ for every regular operator $T$.
\end{lemma}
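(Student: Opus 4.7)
The plan is to reduce via $T = T^+ - T^-$ to the case $T\ge 0$, and then to show that, for fixed $T\ge 0$, the set
\[
	B_T \coloneqq \{ K \in \mathscr{L}^r(E)_+ : KT \in (E'\otimes E)^{\bot\bot} \}
\]
is the positive cone of a band $B$ in $\mathscr{L}^r(E)$, the Banach lattice of regular operators on $E$. Since $B_T$ will turn out to contain every positive finite-rank operator, the band $B$ contains $E'\otimes E$ (which is the linear span of its positive elements, e.g.\ because $\lvert x'\otimes x\rvert = \lvert x'\rvert\otimes \lvert x\rvert$), and hence $B \supseteq (E'\otimes E)^{\bot\bot}$. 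This forces $(E'\otimes E)^{\bot\bot}_+\subseteq B_T$; decomposing an arbitrary kernel operator as $K = K^+ - K^-$ then yields the claim for $KT$, and the mirror argument on the left handles $TK$.

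For the base case I would compute directly on rank-one operators: for $x'\in E'$, $x\in E$, and any regular $T$ one has
\[
	(x'\otimes x)\circ T = (T'x')\otimes x, \qquad T\circ (x'\otimes x) = x'\otimes (Tx),
\]
both of which are again rank-one; by linearity $E'\otimes E$ is thus stable under two-sided multiplication by regular operators, and every positive element of $E'\otimes E$ belongs to $B_T$.

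The band property of $B_T$ boils down to three checks. $B_T$ is a cone by the linearity of right multiplication. It is solid in the positive cone: $0\le L\le K\in B_T$ implies $0\le LT\le KT\in (E'\otimes E)^{\bot\bot}$, and the band $(E'\otimes E)^{\bot\bot}$ is in particular an ideal, so $LT$ lies in it. The key step is closure under directed suprema: because $E$ has order continuous norm, $E$ is Dedekind complete, and so is $\mathscr{L}^r(E)$; directed suprema of positive operators in $\mathscr{L}^r(E)$ are computed pointwise on $E_+$. Hence $K_\alpha\uparrow K$ in $\mathscr{L}^r(E)$ with $K_\alpha\in B_T$ gives $(K_\alpha T)z = K_\alpha(Tz)\uparrow K(Tz) = (KT)z$ for each $z\in E_+$, i.e.\ $K_\alpha T\uparrow KT$ in $\mathscr{L}^r(E)$, and the band property of $(E'\otimes E)^{\bot\bot}$ then places $KT$ in that band.

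The only mildly subtle ingredient is the pointwise description of directed suprema in $\mathscr{L}^r(E)$, which is what makes one-sided multiplication by a positive regular operator order continuous; with this standard input the argument is essentially formal.
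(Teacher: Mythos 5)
Your argument is correct and is essentially the paper's proof in a different packaging: the paper writes $K$ as the supremum of the upward directed set $[0,K]\cap J$ (with $J$ the ideal generated by $E'\otimes E$) and pushes that set through right multiplication by $T$, while you show that the set of positive $K$ with $KT\in(E'\otimes E)^{\bot\bot}$ is the positive cone of a band containing $E'\otimes E$; both hinge on the same rank-one computation $(x'\otimes x)T=(T'x')\otimes x$ and on the pointwise evaluation of directed suprema of positive operators. One point to make explicit in your ``mirror argument'' for $TK$: there the pointwise description of suprema alone does not suffice, since $(TK_\alpha)z=T(K_\alpha z)$ and you must additionally know that $T$ itself is order continuous on $E$ --- which holds because the norm on $E$ is order continuous --- and this is exactly the extra ingredient the paper flags for that half of the statement.
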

\begin{proof}
We may assume that $K, T \geq 0$. Denote by $J$ the (lattice) ideal of regular operators 
		generated by $E'\otimes E$ and let $\mathscr{A}\coloneqq [0,K]\cap J$.
		Then $K = \sup \mathscr{A}$ by \cite[Prop.\ 1.2.6]{meyer1991} and 
		\[ Kx = \sup \{ Ax : A \in \mathscr{A} \} \quad (x\in E_+), \]
		since $\mathscr{A}$ is upwards directed.
		Now, consider the directed set $\mathscr{A}T \coloneqq \{ AT : A\in\mathscr{A} \}$. As every operator of $\mathscr{A}$ is
		dominated by a finite rank operator, so is every $AT$, i.e.\ $\mathscr{A}T \subseteq J$. Now, it follows from
		\[ KTx = \sup\{ ATx : A\in \mathscr{A} \} \quad (x\in E_+) \]
		that $KT=\sup \mathscr{A}T$ and hence $KT\in (E'\otimes E)^{\bot \bot}$.
		By a similar argument, using in addition the fact that $T$ is order continuous because of the order continuity of
		the norm, one observes that $TK \in (E'\otimes E)^{\bot\bot}$.
\end{proof}

The following lemma shows that
under certain conditions some power of a Harris operator always dominates a non-trivial compact operator.
\begin{lemma}
	\label{lem:harriscompact}
	Let $T:E \to E$ be a positive and power bounded Harris operator such that
	$\fix(T)$ contains a quasi-interior point of $E_+$ 
	and that there exists a strictly positive element in $\fix(T')$.
	Then $T^n \geq C >0$ for some $n\in\N$ and some compact operator $C$.
\end{lemma}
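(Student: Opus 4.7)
The plan is to combine the Harris property of $T$ with the Aliprantis--Burkinshaw theorem on iterates of positive operators dominated by a compact operator. First, I would use the Harris property to find some $n_0 \in \N$ for which the band component of $T^{n_0}$ in $(E'\otimes E)^{\bot\bot}$ is non-zero; by positivity this yields $T^{n_0} \geq K > 0$ for some positive kernel operator $K$. Following the argument in the proof of Lemma~\ref{lem:kernelprop}, $K$ is the pointwise supremum on $E_+$ of the upwards directed set $\mathscr{A}$ of positive operators $A$ with $A \leq K$ and $A \leq F$ for some positive finite rank operator $F$. Order continuity of the norm then implies that any increasing net $A_\alpha \uparrow K$ in $\mathscr{A}$ satisfies $A_\alpha x \to Kx$ in norm for every $x \in E_+$, so $A_\alpha \to K$ strongly; together with the uniform bound $\|A_\alpha\| \leq \|K\|$ a short induction gives $A_\alpha^m \to K^m$ strongly for each $m \in \N$.

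Next, for any $A \in \mathscr{A}$ with $A \leq F$ finite rank (hence compact), the Aliprantis--Burkinshaw theorem, sharpened by the order continuity of the norm on $E$, yields that $A^2$ is compact. Since $T^{n_0} \geq A$ implies $T^{2n_0} \geq A^2$, the problem reduces to producing some $A \in \mathscr{A}$ with $A^2 \neq 0$. Whenever $K^2 \neq 0$, the strong convergence $A_\alpha^2 \to K^2$ provides such an $A$ for sufficiently large $\alpha$, and $C \coloneqq A^2$ is the required non-trivial compact operator dominated by $T^{2n_0}$.

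The main obstacle is the nilpotent case $K^2 = 0$. To handle it, I would pass to the kernel part of a higher iterate: with $R \coloneqq T^{n_0} - K \geq 0$, the operator $K_\ell \coloneqq T^{\ell n_0} - R^\ell$ decomposes into products of $K$ and $R$ in which each summand contains at least one factor of $K$, hence by Lemma~\ref{lem:kernelprop} it is again a kernel operator with $T^{\ell n_0} \geq K_\ell \geq 0$. Using the fixed point $e$ and the fixed strictly positive functional $\phi$, one computes $\phi(K_\ell e) = \phi(e) - \phi(R^\ell e)$, and since $R'\phi = \phi - K'\phi$ with $K'\phi > 0$ (as $K > 0$ and $\phi$ is strictly positive), the values $\phi(R^\ell e)$ remain strictly below $\phi(e)$, so $K_\ell$ is non-trivial for every $\ell$. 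The delicate step is then to show that for some $\ell$ the square $K_\ell^2$ does not vanish; this should follow from the power boundedness of $T$ combined with the strict decrease of the sequence $(R')^\ell \phi$ caused by $K'\phi$, but verifying that the iteration really escapes nilpotency is the most subtle part of the argument. Once established, the second paragraph applies with $K_\ell$ and $\ell n_0$ in place of $K$ and $n_0$, producing the desired compact operator.
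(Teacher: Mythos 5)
Your setup is essentially the paper's: the decomposition $T^{\ell n_0}=K_\ell+R^\ell$ with $R=T^{n_0}-K$, the observation that $K_\ell$ is a kernel operator by Lemma~\ref{lem:kernelprop}, and the final step of approximating $K_\ell$ from below by operators dominated by finite rank operators whose squares are compact by the Aliprantis--Burkinshaw theorem, so that $0<C^2\leq K_\ell^2\leq T^{2\ell n_0}$. That last reduction is carried out correctly (the paper checks the convergence $C_k^2v\to K_\ell^2v$ at the two points $v$ and $K_\ell v$, which your uniform bound $\lVert A\rVert\leq\lVert K\rVert$ also delivers).

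However, the step you flag as ``the most subtle part'' is a genuine gap, and it is precisely the heart of the lemma. Knowing that $K_\ell\neq 0$ for every $\ell$ (which your computation with $\phi$ does establish) says nothing about $K_\ell^2$: a nonzero positive operator can perfectly well have vanishing square, and no amount of strict decrease of $(R')^\ell\phi$ rules this out, since that functional only detects $K_\ell e>0$, not $K_\ell(K_\ell e)>0$. Moreover, working on the dual side is awkward here because $E'$ need not have order continuous norm, so $(R')^\ell\phi$ need not converge in any useful sense. The paper's resolution works on the primal side: since $Re=e-Ke<e$, the sequence $R^\ell e$ decreases, and order continuity of the norm on $E$ gives a norm limit $u=\lim_\ell R^\ell e\in\fix(R)$ with $0\leq u<e$; the strictly positive $\phi$ then upgrades $T^{n_0}u\geq u$ to $u\in\fix(T^{n_0})$, so $v\coloneqq e-u>0$ is a fixed vector of $T^{n_0}$ with $R^\ell v\to 0$. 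Consequently $K_\ell v=v-R^\ell v\to v$, and the uniform boundedness of the $K_\ell$ (from power boundedness of $T$) gives $K_\ell^2v\to v\neq 0$, so $K_\ell^2\neq 0$ for $\ell$ large. Without this (or an equivalent) argument producing a single vector on which $R^\ell$ tends to zero, your proof does not close.
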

\begin{proof}
	After replacing $T$ with a suitable power $T^m$,
	we may assume that $T \geq K >0$ for some kernel operator $K:E\to E$.
	Let $S\coloneqq T-K$.
	Since the kernel operators form an algebra ideal, see Proposition \ref{lem:kernelprop},
	$K_n \coloneqq T^n - S^n \in (E'\otimes E)^{\bot\bot}$ for all $n\in\N$.

	Let $e\in \fix(T)$ be a quasi-interior point of $E_+$ and $x'\in \fix(T')$
	be strictly positive. It follows from $e = Te = Ke + Se > Se$ that $S^{n+1}e \leq S^ne$ 
	for all $n\in\N$.
	By the order continuity of the norm,
	$u\coloneqq \lim_{n\to\infty} S^n e \in \fix(S)$ exists and satisfies $0\leq u < e$.
	Since $x'$ is strictly positive, we infer from $Tu=Ku+Su \geq u$ and $\applied{x'}{Tu-u}=0$ that $u\in\fix(T)$.
	Hence, $v\coloneqq e-u >0$ is a fixed point of $T$ and $\lim S^n v =0$.
	As the operators $K_n$ are uniformly bounded, it follows from
	\[ K_n v = T^nv - S^n v  \to v \quad (n\to\infty) \]
	that
	$\lim K_n^2 v = v$. In particular, $K_m^2 \neq 0$ for a suitable $m\in\N$ large enough.

	It follows from \cite[Prop.\ 1.2.6]{meyer1991} that
	$K_m = \sup ([0,K_m]\cap J)$ where $J$ is the ideal generated by $E'\otimes E$. Thus, there exists a (bounded)
	sequence $(C_k) \subseteq [0,K_m]\cap J$ such that $\lim C_k x  =  K_m x$ for $x\in \{v,K_m v\}$.
	Since every $C_k$ is dominated by a
	finite rank operator, $C_k^2$ is compact for every $k\in\N$ by \cite[Cor.\ 3.7.15]{meyer1991}.
	We conclude from 
	\[ \Vert C_k^2 v - K_m^2 v \Vert \leq \Vert C_k (C_k v - K_m v) \Vert + \Vert (C_k - K_m) K_m v\Vert \]
	that $\lim C_k^2 v  = K_m^2 v \neq 0$. Therefore,
	$ 0 < C_k^2 \leq K_m^2 \leq T^{2m}$ for a suitable $k\in \N$.
\end{proof}

Arendt proved in \cite[Kor.\ 1.26]{arendt1979} that compact operators are disjoint from lattice isomorphisms on a diffuse and order complete
Banach lattice. This is an essential tool for our spectral analysis in Section \ref{sec:mainresults}.
In order to be more self-contained, we give a proof of this result for a Banach lattice with order continuous norm.

\begin{theorem}
\label{thm:VKdisjoint}
Let $V,K:E\to E$ be positive operators on a diffuse Banach lattice $E$ with order continuous norm. If
$V$ is a lattice isomorphism and $K$ is compact, then $V\wedge K =0$.
\end{theorem}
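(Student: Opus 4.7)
The plan is to reduce the disjointness of $V$ and $K$ to the triviality of an operator lying in the center of $E$, and then to exploit the diffuseness of $E$ together with the order continuity of its norm.

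I would set $W \coloneqq V^{-1}(V \wedge K)$. Since $V$ is a lattice isomorphism, $V^{-1}$ exists and is positive, hence $W \geq 0$; from $V \wedge K \leq V$ and $V \wedge K \leq K$ one obtains $W \leq V^{-1}V = I$ and $W \leq V^{-1}K$. Setting $U \coloneqq V^{-1}K$, the operator $U$ is compact as a composition of a bounded operator with a compact one. The bound $W \leq I$ places $W$ in the center $Z(E) \coloneqq \{T \in L_r(E) : |T| \leq cI \text{ for some } c \geq 0\}$, a commutative $f$-algebra. Since $V \wedge K = VW$, it suffices to show $W = 0$.

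Suppose for contradiction that $W \neq 0$. The Freudenthal spectral theorem applied in $Z(E)$ yields a non-zero band projection $P \in Z(E)$ and a constant $\delta > 0$ with $W \geq \delta P$; equivalently, $Wy \geq \delta y$ for every $y$ in the band $B \coloneqq PE$. Being a non-zero band of the diffuse Banach lattice $E$, $B$ is itself diffuse. Iterating the splitting of a positive element into two disjoint non-zero positive summands (possible by the absence of atoms in $B$) and then normalizing, I extract a pairwise disjoint sequence $(e_n) \subseteq B_+$ of unit vectors.

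From $W \leq U$ and $W e_n \geq \delta e_n$ one has $\delta e_n \leq U e_n$. Compactness of $U$ combined with the boundedness of $(e_n)$ permits passage to a subsequence along which $(Ue_{n_k})$ converges in norm to some $z \in E_+$ so rapidly that $\sum_k \|Ue_{n_k} - z\| < \infty$. Order continuity of the norm then ensures that $w \coloneqq \sum_k |Ue_{n_k} - z|$ converges in $E$, and $Ue_{n_k} \leq z + w$ for every $k$. Consequently $e_{n_k} \leq \delta^{-1}(z + w)$ for all $k$, so $(e_{n_k})$ is order bounded. A disjoint, order bounded sequence in a Banach lattice with order continuous norm converges to zero in norm, contradicting $\|e_{n_k}\| = 1$. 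Hence $W = 0$.

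The main obstacle is the final step of upgrading norm convergence of $(Ue_{n_k})$ to an \emph{order} bound for the dominated sequence $(e_{n_k})$. The device is to first thin to a rapidly convergent subsequence so the discrepancies sum absolutely in norm, and then to invoke order continuity to realize this formal sum as a bona fide element of $E$ which serves as the needed common upper bound. A secondary technicality is the spectral decomposition of $W \in Z(E)$, which rests on the structure of $Z(E)$ as a unital commutative $f$-algebra with sufficiently many band projections.
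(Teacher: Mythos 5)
Your proof is correct. It follows the same broad strategy as the paper's (reduce to an operator dominated by $I$, use diffuseness to manufacture a disjoint normalized sequence, and squeeze it with a compact operator), but the execution differs in three worthwhile ways. First, your reduction sets $W\coloneqq V^{-1}(V\wedge K)$ and uses only the positivity of $V^{-1}$ to get $0\leq W\leq I$, $W\leq V^{-1}K$ and $VW=V\wedge K$; the paper instead writes $V\wedge K=(I\wedge KV^{-1})V$, which requires knowing that $T\mapsto TV$ is a lattice homomorphism on the regular operators. Second, and more substantially, the paper must show that the orthomorphism $S=I\wedge K$ is itself compact (citing the Aliprantis--Burkinshaw result that an orthomorphism dominated by a compact operator is compact) and then runs a norm estimate $\lVert u_n\rVert\leq c\lVert Su_n-y\rVert+c^2\lVert SP_ny\rVert$; you avoid that citation entirely by working with the compact majorant $U=V^{-1}K$ directly, thinning to a subsequence with $\sum_k\lVert Ue_{n_k}-z\rVert<\infty$ so that $z+\sum_k\lvert Ue_{n_k}-z\rvert$ is an order bound for $(\delta e_{n_k})$, and then invoking only the fact that disjoint order bounded sequences are norm null. (As a small aside, the convergence of that series needs only completeness of $E$, not order continuity; order continuity enters where you apply the norm-null property.) Third, where the paper localizes via the Archimedean property at a single vector ($Px\leq cSPx$ with $Px>0$), you get a uniform bound $W\geq\delta P$ on a whole band via Freudenthal's theorem in $Z(E)$; this is legitimate because order continuity of the norm makes $E$ Dedekind complete, so $Z(E)$ has enough components of $I$, and these are exactly the band projections of $E$ --- but it is worth noting that the paper's pointwise Archimedean argument would serve here as a more elementary substitute. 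Net effect: your version trades one nontrivial citation (domination of compactness for orthomorphisms) for another (Freudenthal), and the summable-subsequence device for producing an order bound is a genuinely different, and rather clean, way to close the argument.
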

\begin{proof}
	First, we show that $S\coloneqq I\wedge K=0$. Aiming for a contradiction,
	we assume that $Sx>0$ for some $x\in E_+$.
	Since $E$ is Archimedean, we find some $c>0$
	such that $w\coloneqq cSx - x$ is not negative, i.e.\ it has a non-trivial positive part.
	Denote by $P$ the band projection onto $\{w^+\}^{\bot\bot}$, the band generated by $w^+$.
	Then
	\[ 0 < w^+ = Pw = P(cSx-x)=cPSx - Px.\]
	It follows easily from $S\leq I$ that $S$ is an orthomorphism, i.e.\ it commutes with every band projection.
	Thus, $SPx>0$ and hence $Px>0$.
	Since $E$ is diffuse, we are able to construct a sequence $(x_n)\subseteq E$ of pairwise disjoint elements, each satisfying $0<x_n<Px$,
	by applying \cite[Lem.\ 2.7.12]{meyer1991} inductively.
	Let  $P_n$ be the band projection onto $\{x_n\}^{\bot\bot}$ and define $u_n \coloneqq P_n x / \lVert P_n x \rVert$.
	As the orthomorphism $S$ is dominated by the compact operator $K$, $S$ itself is compact by \cite[Thm.\ 16.21]{aliprantis1985}.
	Hence, after passing to a subsequence, $(Su_n)$ converges to some $y\in E$.
	By the order continuity of the norm, the disjoint and order bounded sequence $(P_n y)$ converges to zero \cite[Thm.\ 2.4.2]{meyer1991}.
	Now, it follows from
	\[ \lVert P_n x \rVert u_n =P_n x = P_n P x \leq c P_n SPx = c P_n S x = c \lVert P_n x \rVert S u_n \]
	for all $n\in\N$ that
	\[ P_n y = \lim_{m\to\infty} P_n S u_m \leq \lim_{m\to\infty} P_n S c S u_m  = c S P_n y\]
	and hence 
	\begin{align*}
		 \lVert u_n \rVert \leq c\lVert P_n Su_n \rVert \leq c\lVert P_n S u_n - P_n y\rVert + c\lVert P_n y \rVert
		\leq c\lVert Su_n - y\rVert + c^2 \lVert SP_n y\rVert.
	\end{align*}
	for all $n\in\N$. The right-hand side tends to zero, which is a contradiction to $\lVert u_n \rVert=1$.
	Hence, $S=I\wedge K = 0$.

	In order to complete the proof, we recall that the mapping $T\mapsto TV$ is a lattice homomorphism on 
	the lattice of all regular operators on $E$, see \cite[Thm.\ 7.4]{aliprantis1985}.
	Therefore, $V\wedge K = (I\wedge KV^{-1})V$ and hence vanishes by the first part of the proof since $KV^{-1}$ is compact.
\end{proof}

\section{Triviality of the Peripheral Point Spectrum}
\label{sec:mainresults}

In this section we give the proofs of our main results, which are 
inspired by \cite[Thm.\ 3.1]{arendt2008}. There, the semigroup is assumed to
contain a kernel operator, which is a less general hypothesis than ours.

Again, let $E$ be a Banach lattice with order continuous norm.

\begin{theorem}
	\label{thm:trivialspectrum}
	Let $\mathscr{T}=(T(t))_{t\in [0,\infty)}$ be a positive, irreducible and
	bounded strongly continuous semigroup with generator $A$. If there exists $t_0>0$ such that
	$T(t_0)\geq K>0$ where $K$ is a compact or a kernel operator,
	then $\sigma_p(A_\C) \cap i\R \subseteq \{ 0\}$.
\end{theorem}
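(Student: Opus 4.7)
Assume for contradiction that $i\alpha \in \sigma_p(A_\C)$ for some $\alpha \neq 0$, with eigenvector $x \in E_\C \setminus \{0\}$, so that $T(t)_\C x = e^{i\alpha t} x$ for all $t \geq 0$. The plan is to pass to a group structure via Jacobs--de Leeuw--Glicksberg and then derive a contradiction with Theorem \ref{thm:VKdisjoint} (in the diffuse case) or Lemma \ref{lem:atomsfixed} (in the atomic case).

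From $T(t)|x| \geq |T(t)_\C x| = |x|$, Lemma \ref{lem:fixedpointadjoint} yields that $|x|$ is a quasi-interior point of $E_+$, $\fix(\mathscr{T}) = \lin\{|x|\}$, and there is a strictly positive $x' \in \fix(\mathscr{T}')$. Applying Theorem \ref{thm:JdLG} with $e = |x|$ produces a strictly positive projection $Q \in \mathscr{S}$ (the WOT-closure of $\{T(t) : t \geq 0\}$) commuting with every element of $\mathscr{S}$, such that $Q\mathscr{S}$ is a bounded group of positive operators with neutral element $Q$. Taking a net $T(t_\beta) \to Q$ in WOT one gets $Q_\C x = \gamma x$ for some $|\gamma| = 1$, and $Q^2 = Q$ forces $\gamma = 1$, so $Qx = x$ and $x$ lies in the complexification of $F := QE$. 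If $K$ is merely a kernel operator, Lemma \ref{lem:harriscompact} applied to $T(t_0)$ (which is Harris, power-bounded, with the quasi-interior fixed point $|x|$ and the strictly positive fixed functional $x'$) produces a power $T(n t_0) \geq C > 0$ with $C$ compact; replacing $t_0$ by $n t_0$ reduces everything to the compact case.

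I then set $V := Q T(t_0) \in Q\mathscr{S}$, which has a positive inverse in $Q\mathscr{S}$, so that $V|_F$ is a lattice isomorphism of $F$ (with its intrinsic Banach-lattice structure). Moreover $V \geq QK$, and $QK \neq 0$: strict positivity of $Q$ together with $|x|$ being quasi-interior rules out $KQ \equiv 0$ on $E_+$, since that would force $K |x| = 0$ and hence $K = 0$ on the dense ideal $E_{|x|}$. The algebra-ideal property for compact operators keeps $QK$ compact.

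Because the atomic and diffuse bands of $E$ are both closed $\mathscr{T}$-invariant ideals, irreducibility forces $E$ to be purely atomic or diffuse. In the diffuse case, $F$ is diffuse as well, and Theorem \ref{thm:VKdisjoint} applied on $F$ yields $V|_F \wedge QK|_F = 0$, contradicting $V|_F \geq QK|_F > 0$. In the purely atomic case, Lemma \ref{lem:atomsfixed} applied to the bounded continuous one-parameter group on $F$ obtained by extending $\{QT(t)\}_{t \geq 0}$ via inversion inside $Q\mathscr{S}$ places every atom of $F$ into $\fix(\mathscr{T}) = \lin\{|x|\}$; combined with $|x|$ being quasi-interior this forces $F$ to be one-dimensional, so $x$ is a scalar multiple of $|x|$, yielding $T(t)_\C x = x$ and hence $\alpha = 0$, a contradiction. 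The main obstacle will be running the atomic/diffuse dichotomy cleanly on the reversible part $F$, in particular verifying that $F$ with its intrinsic Banach-lattice structure inherits the appropriate atomic or diffuse property from $E$ so that Theorem \ref{thm:VKdisjoint} and Lemma \ref{lem:atomsfixed} legitimately apply.
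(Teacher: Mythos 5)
Your overall strategy matches the paper's: reduce to a compact minorant via Lemma \ref{lem:harriscompact}, pass to the reversible part $F=QE$ via Theorem \ref{thm:JdLG}, and then play Theorem \ref{thm:VKdisjoint} against Lemma \ref{lem:atomsfixed}. Your argument that $Q_\C x = x$ (take a net $T(t_\beta)\to Q$, extract $\gamma$ with $|\gamma|=1$, use $Q^2=Q$) is correct and in fact slightly cleaner than the paper's duality argument. The gap is exactly where you suspected it: the atomic/diffuse dichotomy, and it is not a technicality but a step that fails.

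First, the atomic band of $E$ is in general \emph{not} invariant under a positive operator (only lattice isomorphisms map atoms to atoms, cf.\ Remark \ref{rem:atomstoatoms}; a general positive operator can send an atom into the diffuse band and vice versa), so irreducibility does not force $E$ to be purely atomic or purely diffuse. Second, and more fatally, even when $E$ is diffuse, $F=QE$ need not be: $F$ is only the range of a positive projection, hence a closed sublattice and not an ideal, and sublattices of diffuse lattices can be atomic. For an irreducible Markov semigroup on $L^2[0,1]$ with invariant density converging to a rank-one projection, $E$ is diffuse while $F=\lin\{e\}$ is one-dimensional, hence atomic. Note also that your diffuse-branch contradiction ($V|_F\wedge QK|_F=0$ versus $V|_F\geq QK|_F>0$) nowhere uses $\alpha\neq 0$; if it were valid it would exclude \emph{every} peripheral eigenvector, including the fixed vector $e$, on every diffuse $E$ --- which the example above refutes. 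The repair is the paper's move: run the dichotomy on $F$ itself, with no reference to $E$. Either $F$ is diffuse, in which case Theorem \ref{thm:VKdisjoint} applied \emph{on $F$} (after checking that $F$ is a closed sublattice with order continuous norm, which follows from the strictly positive fixed functional $x'$) contradicts $V|_F\geq QK|_F>0$; or $F$ contains an atom $a$, and then Lemma \ref{lem:atomsfixed}, applied to the bounded group on $F$ obtained from $Q\mathscr{S}$, puts $a$ into $\fix(\mathscr{T})=\lin\{e\}$, so $e$ is an atom of $F$, $F=\overline{F_e}=\lin\{e\}$, and $x\in F_\C$ forces $\alpha=0$.
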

\begin{proof}
	Let $\alpha \in \R$ such that $i\alpha \in \sigma_p(A_\C) \cap i\R$.
	Then $T(t)_\C z = \exp(i\alpha t)z$ for some $z\in E_\C\backslash\{0\}$.
	Since $0<\vert z \vert = \vert T(t)_\C z \vert \leq T(t)\vert z\vert$ for all $t\geq 0$, 
	by Lemma~\ref{lem:fixedpointadjoint} 
	there exists a strictly positive $\phi\in \fix(\mathscr{T}')$ and $\fix(\mathscr{T}) = \lin\{e\}$,
	where $e\coloneqq \vert z \vert$ is a quasi-interior point of $E_+$.
	In view of Lemma \ref{lem:harriscompact}, we may now assume that $T(t_0)\geq K >0$ for a compact operator $K$.

	Denote by $\mathscr{S}$ the closure of $\{T(t) : t\geq 0\}$ in $\mathscr{L}_\sigma(E)$, 
	the bounded linear operators on $E$ endowed with the weak operator topology.
	Then, by Theorem~\ref{thm:JdLG}, $\mathscr{S}$ 
	contains a strictly positive projection $Q$ commuting with every operator in $\mathscr{S}$ such that
	$\fix(\mathscr{T})\subseteq F\coloneqq QE$
	and that $Q\mathscr{S}\coloneqq\{QS : S\in\mathscr{S}\}\subseteq\mathscr{S}$ is norm bounded 
	group of positive operators with neutral element $Q$.
	Hence, the restriction of the elements of $\mathscr{S}$ to $F$ defines a group of lattice isomorphisms.

	If $x\in F$, then $\vert x \vert  = \vert Q x \vert  \leq Q \vert x\vert$ and
	since $\phi$ is also a strictly positive fixed point of $Q'$, it follows that 
	$\applied{\phi}{Q\vert x\vert  - \vert x \vert}=0$ and hence $Q\vert x\vert  =\vert x \vert$.
	Thus, $F$ is a closed sublattice of $E$ and hence a Banach lattice. Moreover, the norm on $F$ is order continuous 
	because every monotone order bounded sequence in $F$ converges (see \cite[Thm.\ 2.4.2]{meyer1991}).

	Since $Q$ is strictly positive and the quasi-interior point $e$ belongs to $F$, $QK_{\mid F}e >0$.
	Thus, $T(t_0)_{\mid F} = QT(t_0)_{\mid F}$ dominates the non-trivial compact operator $QK_{\mid F}$.
	If $F$ were diffuse, compact operators would be disjoint from lattice isomorphisms 
	by Theorem \ref{thm:VKdisjoint}.
	Thus, there exists at least one atom $a\in F$. 
	Since $a\in\fix(\mathscr{T})=\lin\{e\}$ by Lemma~\ref{lem:atomsfixed}, we obtain that $e$ is an atom of $F$ and
	consequently,
	\[ F = \overline{F_e} = \lin\{e\} =\fix(\mathscr{T}).\]
	It remains to check that $z\in F_\C = Q_\C E_\C$. Then $z\in \fix(\mathscr{T}_\C)$ and therefore
	$\alpha=0$. 
	Let $w\in (E_\C)'$ such that $\lvert\applied{T(t)_\C z}{w}\rvert = \applied{z}{w} = \Vert z\Vert >0$.
	Since $Q_\C$ is in the closure 
	of $\mathscr{T}_\C$ in $\mathscr{L}_\sigma(E_\C)$, there exists $t\geq 0$ such that 
	\[ \vert \applied{T(t)_\C z - Q_\C z}{w}\vert < \Vert z\Vert.\]
	This implies that 
	\[ \vert \applied{Q_\C z}{w} \vert \geq  \lvert\applied{T(t)_\C z}{w} \rvert - \lvert \applied{T(t)_\C z - Q_\C z}{w}\rvert >0 \]
	 and hence $Q_\C z \neq 0$.
	Now, choose $w\in (E_\C)'$ vanishing on $z$ and let $\eps >0$.
	Again, there exists $t\geq 0$ such that
	\[ \vert\applied{T(t)_\C z - Q_\C z}{w}\vert < \eps\]
	which implies that
	\[ \vert \applied{Q_\C z}{w}\vert \leq \vert\applied{T(t)_\C z - Q_\C z}{w}\vert
	+\vert\applied{T(t)_\C z}{w}\vert < \eps.\]
	Since $\eps>0$ was arbitrary, $\applied{Q_\C z}{w}=0$. Thus, $Q_\C z \in \lin\{z\}$
 	and consequently $Q_\C z = z \in F_\C$ because $Q_\C$ is a projection.
\end{proof}

As we will see next, this result remains true for Abel bounded semigroups of Harris operators.

\begin{prop}
	\label{prop:trivialspectrumabel}
	Let $\mathscr{T}=(T(t))_{t\in [0,\infty)}$ be a positive, irreducible and
	Abel bounded strongly continuous semigroup with generator $A$. 
	If $T(t_0)$ is a Harris operator for some $t_0>0$, 
	then $\sigma_p(A_\C) \cap i\R \subseteq \{ 0\}$.
\end{prop}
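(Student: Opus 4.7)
The plan is to mirror the proof of Theorem~\ref{thm:trivialspectrum}, modifying the two places where uniform boundedness was essential: Lemma~\ref{lem:harriscompact} (which requires power boundedness of $T(t_0)$) and the Jacobs--de Leeuw--Glicksberg splitting of Theorem~\ref{thm:JdLG}.

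The setup carries over verbatim. Assume $i\alpha \in \sigma_p(A_\C) \cap i\R$ with $T(t)_\C z = e^{i\alpha t} z$ for some $z \in E_\C \setminus \{0\}$. From $T(t) |z| \geq |z|$ and Lemma~\ref{lem:fixedpointadjoint}, which is already stated for Abel bounded semigroups, one obtains that $e \coloneqq |z|$ is a quasi-interior fixed point with $\fix(\mathscr{T}) = \lin\{e\}$ and that there is a strictly positive $\phi \in \fix(\mathscr{T}')$. In particular, the lattice norm $p(x) \coloneqq \applied{\phi}{|x|}$ is contracted by $\mathscr{T}$ and satisfies $p(x) \leq \|\phi\|\,\|x\|$ for every $x \in E$.

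My next move is to pass to the completion of $E$ in the norm $p$. This yields an abstract $L$-space $\hat{E}$, which automatically has order continuous norm, and on which $\mathscr{T}$ extends to a positive contractive strongly continuous semigroup $\hat{\mathscr{T}}$ (strong continuity transfers from $E$ to $\hat{E}$ via $p \leq \|\phi\|\,\|\cdot\|$). The eigenvector $z$ embeds continuously into $\hat{E}_\C$ and remains an eigenvector of the generator $\hat{A}_\C$ at $i\alpha$. Provided that irreducibility and the domination of a non-trivial compact or kernel operator both transfer from $\mathscr{T}$ on $E$ to $\hat{\mathscr{T}}$ on $\hat{E}$, Theorem~\ref{thm:trivialspectrum} applied to $\hat{\mathscr{T}}$ forces $\alpha = 0$.

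The main obstacle is precisely this transfer of the Harris property and irreducibility to $\hat{E}$: the $p$-extension of a nontrivial kernel operator $K \leq T(t_0)$ on $E$ must remain a nontrivial kernel operator on $\hat{E}$, and closed $\hat{\mathscr{T}}$-invariant ideals must pull back to closed $\mathscr{T}$-invariant ideals of $E$. An alternative that keeps the space fixed is to construct a JdLG-style projection $Q$ directly on $E$ using the uniformly bounded family of Abel means $\{\mu R(\mu, A_\C) : \mu > 0\}$ in place of the semigroup orbit, combined with the relative weak compactness of $\mathscr{T}$-orbits on the dense principal ideal $E_e$ (which holds because $|T(t) x| \leq c T(t) e = c e$ whenever $|x| \leq c e$, and order intervals are weakly compact by order continuity of the norm). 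Once such a $Q$ is at hand, the final portion of the proof of Theorem~\ref{thm:trivialspectrum} -- passing through Lemma~\ref{lem:harriscompact}, Theorem~\ref{thm:VKdisjoint}, and Lemma~\ref{lem:atomsfixed} to show that $F \coloneqq Q E$ contains an atom and hence equals $\lin\{e\}$ -- proceeds verbatim and forces $z \in F_\C = \fix(\mathscr{T}_\C)$, whence $\alpha = 0$.
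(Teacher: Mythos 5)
Your first strategy --- renorming by $\Vert x\Vert_\phi=\applied{\phi}{\vert x\vert}$ and passing to the completion --- is exactly the route the paper takes, but as written your argument stops at the point where the actual work begins. You explicitly defer the two claims on which everything hinges: that irreducibility transfers to the completion $F$, and that the Harris property transfers. Neither is automatic, and neither is resolved by your sketch. For irreducibility the key fact is that $\phi$ is \emph{order continuous} (it is, because the norm on $E$ is), so that $E$ sits as an \emph{ideal} in $F$ by \cite[Lem.\ IV 9.3]{schaefer1974}; only then is every closed $\tilde{\mathscr{T}}$-invariant ideal $\tilde J\subseteq F$ the $F$-closure of $\tilde J\cap E$, which lets you pull it back. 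The Harris transfer is genuinely delicate: a kernel operator $K$ with $T(t_0)\wedge \lvert w'\otimes w\rvert>0$ on $E$ involves a functional $w'\in E'$ that need not extend to $F'$ at all, so ``the $p$-extension of a nontrivial kernel operator remains a nontrivial kernel operator'' is precisely what must be proved. The paper's device is to invoke Nakano's carrier theorem to see that the strictly positive $\phi$ is a weak order unit of $E'$, write $\lvert w'\rvert\otimes\lvert w\rvert=\sup_m (m\phi\wedge\lvert w'\rvert)\otimes\lvert w\rvert$, and deduce $T(t_0)\wedge(\phi\otimes\lvert w\rvert)>0$; the rank-one operator $\phi\otimes\lvert w\rvert$ \emph{does} extend to $F$ (since $\phi$ is the norming functional of $F$), and because $E$ is an ideal in $F$ the lattice infimum defining $\tilde T\wedge(\tilde\phi\otimes\lvert w\rvert)$ computed over $F$ coincides with the one computed over $E$. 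Without these two verifications the reduction to Theorem~\ref{thm:trivialspectrum} is not justified.

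Your proposed alternative --- building a Jacobs--de Leeuw--Glicksberg projection directly on $E$ from the Abel means --- does not repair this. Theorem~\ref{thm:JdLG} needs a \emph{bounded} semigroup (the closure $\mathscr{S}$ must be norm bounded to get the group $Q\mathscr{S}$ of lattice isomorphisms on $F=QE$, which is what feeds into Theorem~\ref{thm:VKdisjoint}); the Abel means $\mu R(\mu,A)$ are uniformly bounded but do not form a semigroup, so the splitting theory producing a compact group in the weak operator closure is not available for them, and weak compactness of orbits on the dense ideal $E_e$ alone does not give you a uniformly bounded extension of the would-be projection to all of $E$. So this branch is a sketch of an idea rather than a proof. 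In short: right opening move, but the substantive content of the proposition --- the two transfer lemmas --- is missing.
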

\begin{proof}
	Assume that $T(t)_\C z = \exp(i\alpha t)z$ for some $z\in E_\C\backslash\{0\}$, $\alpha\in\R$ and all $t\geq 0$.
	Since $\lvert z\rvert \leq T(t)\lvert z\rvert$ for all $t\geq 0$, it follows from Lemma \ref{lem:fixedpointadjoint}
	that there exists a strictly positive linear form $\phi\in\fix(\mathscr{T}')$. 
	Now, we endow $E$ with the order continuous lattice norm
	\[ \Vert x \Vert_\phi \coloneqq \applied{\phi}{\vert x\vert} \quad (x\in E).\]
	Let $(F,\lVert \cdot \rVert_F)$ be the completion of $(E,\lVert \cdot \rVert_{\phi})$, i.e.\ the closure of $E$ in
	the bidual $(E,\Vert \cdot \Vert_\phi)''$.
	Then $\mathscr{T}$ uniquely extends to a positive strongly
	continuous contraction semigroup $\tilde{\mathscr{T}} = (\tilde T(t))_{t \in [0,\infty)}$ on $F$.

	We show that $\tilde{\mathscr{T}}$ is irreducible.
	Let $\tilde J$ be a closed and $\tilde{\mathscr{T}}$-invariant ideal in $(F,\lVert \cdot \rVert_F)$.
	Then $J\coloneqq \tilde J \cap E$ is a closed and $\mathscr{T}$-invariant ideal in $(E,\lVert \cdot \rVert)$ 
	and hence $J=\{0\}$ or $J=E$.
	Since $\phi$ is order continuous, $E$ is an ideal in $F$ by \cite[Lem.\ IV 9.3]{schaefer1974}.
	 This implies that $\tilde J$ is the closure of $J$ in $F$ and hence either $\tilde J=\{0\}$ or $\tilde J = F$.
	Thus, $\tilde{\mathscr{T}}$ is irreducible.

	Next, we verify that $\tilde T(t_0)$ is a Harris operator.
	To simplify notation, we assume that $T\coloneqq T(t_0) \not \in (E'\otimes E)^\bot$; 
	otherwise we replace $t_0$ with $nt_0$ for a suitable $n\in\N$.
	Hence, there exist $w'\in E'$, $w\in E$ and $z\in E_+$ such that $(T \wedge \lvert w' \otimes w\rvert) z > 0$.
	In view of Nakano's carrier theorem \cite[Thm.\ 1.4.11]{meyer1991}, the strictly positive $\phi$ is a weak order unit of $E'$.
	Since
	\[ \lvert w'\rvert \otimes \lvert w\rvert  
	= \sup_{m\in\N} ((m\phi \wedge \lvert w'\rvert ) \otimes \lvert w\rvert)
	= \sup_{m\in\N} ((m\phi \otimes \lvert w\rvert) \wedge (\lvert w'\rvert \otimes \lvert w\rvert)),\]
	one has that
	\begin{align*}
		0 < T\wedge \lvert w' \otimes  w\rvert  &= T\wedge (\lvert w'\rvert \otimes \lvert w\rvert) \\
		&= T \wedge \sup \{ (m\phi\otimes \lvert w\rvert) \wedge(\lvert w'\rvert \otimes \lvert w\rvert) : m\in\N \}\\
		&= \sup\{ T \wedge (m\phi \otimes \lvert w\rvert ) \wedge (\lvert w' \rvert \otimes \lvert w\rvert) : m\in\N\}.
	\end{align*}
	Hence, $m(T\wedge (\phi \otimes \lvert w\rvert)) \geq T\wedge (m\phi \otimes \lvert w\rvert)>0$ for some $m\in\N$.
	Since the extensions $\tilde \phi\otimes \lvert w\rvert$ and $\tilde T$ leave the ideal $E$ invariant,
	we conclude that 
	\begin{align*}
		(\tilde T\wedge (\tilde \phi \otimes \lvert w\rvert))z &= \inf_F \{ \tilde T(z-y) +\tilde\phi(y) \lvert w\rvert  : y\in F,\, 0\leq y\leq z\} \\
		&= \inf_E \{ T(z-y)+\phi(y)\lvert w\rvert : y\in E,\, 0\leq y\leq z\} \\
		&= (T\wedge (\phi\otimes \lvert w\rvert))z > 0. 
	\end{align*}
	This shows that $\tilde T = \tilde T(t_0) \not\in (F'\otimes F)^\bot$.
	
	Finally, we conclude from Theorem \ref{thm:trivialspectrum} that 
	$\sigma_p(A_\C)\cap i\R \subseteq \sigma_p(\tilde A_\C)\cap i\R \subseteq \{0\}$ where
	$\tilde A$ is the generator of $\tilde{\mathscr{T}}$.
\end{proof}

We continue with a consideration of discrete semigroups, i.e.\  powers of a single operator.
Let $\Gamma\coloneqq \{z\in \C : \vert z\vert=1\}$ denote the unit circle. 
The following simple example shows that one cannot expect that $\sigma_p(T_\C)\cap\Gamma \subseteq\{1\}$ 
in analogy to Theorem~\ref{thm:trivialspectrum}.

\begin{exa}
\label{ex:matrix}
On $E=\R^2$, consider the matrix
\begin{align}  T\coloneqq \begin{pmatrix} 0&1\\1&0 \end{pmatrix} \end{align}
Then $T$ is a positive, irreducible, contractive and compact operator but $-1 \in \sigma_p(T_\C)$.
\end{exa}

However, the following theorem states that the point spectrum of some power of $T_\C$ is trivial.

\begin{theorem}
	\label{thm:trivialspectrumdiscrete}
	Let $T:E\to E$ be a positive, irreducible and
	power bounded operator. If there exists $m\in\N$ 
	such that $T^m \geq K>0$ for a compact or a kernel operator $K$,
	then $\sigma_p(T_\C^n) \cap \Gamma \subseteq \{1\}$ for some $n\in\N$.
\end{theorem}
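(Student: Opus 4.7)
The plan is to follow the template of Theorem \ref{thm:trivialspectrum}, using Lemma \ref{lem:atomsfixeddiscrete} in place of Lemma \ref{lem:atomsfixed}. A preliminary observation is that every peripheral eigenvalue $\mu$ of $T_\C^n$ has the form $\lambda^n$ for some peripheral eigenvalue $\lambda$ of $T_\C$: if $T_\C^n w = \mu w$ with $w\neq 0$, factoring $T_\C^n - \mu I = \prod_{j=0}^{n-1}(T_\C - \mu^{1/n}\zeta^j I)$ (with $\zeta = e^{2\pi i/n}$ and $\mu^{1/n}$ any fixed $n$-th root of $\mu$) shows that some factor must have nontrivial kernel, so some $\lambda = \mu^{1/n}\zeta^j$ lies in $\sigma_p(T_\C) \cap \Gamma$. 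In particular the conclusion holds trivially with $n=1$ when $\sigma_p(T_\C)\cap\Gamma = \emptyset$, so I assume there is $z \in E_\C\setminus\{0\}$ and $\lambda_0 \in \Gamma$ with $T_\C z = \lambda_0 z$. Then $\vert z\vert \leq T\vert z\vert$ and Lemma \ref{lem:fixedpointadjoint} produces a strictly positive $\phi \in \fix(T')$, the quasi-interior point $e \coloneqq \vert z\vert$, and $\fix(T) = \lin\{e\}$.

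Next I apply Theorem \ref{thm:JdLG} to $\mathscr{T} = (T^k)_{k\in\N_0}$ to obtain a strictly positive projection $Q$ in the $\mathscr{L}_\sigma(E)$-closure of $\{T^k\}$ such that $F \coloneqq QE$ is (as in Theorem \ref{thm:trivialspectrum}) a closed sublattice of $E$ with order continuous norm and $T|_F$ is a lattice isomorphism with $\sup_{k\in\Z}\Vert (T|_F)^k\Vert < \infty$. I then verify that $T|_F$ is irreducible on $F$: given a closed $T|_F$-invariant ideal $J \subseteq F$, the closed ideal $\tilde J$ of $E$ generated by $J$ is $T$-invariant (since $T\geq 0$ and $\vert y\vert \in J$ for every $y\in J$), hence equals $\{0\}$ or $E$ by irreducibility of $T$; applying the positive projection $Q$, which restricts to the identity on $F\supseteq J$, one obtains $J = \{0\}$ or $J \supseteq QE = F$.

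By Lemma \ref{lem:harriscompact} I may assume $T^m \geq K > 0$ with $K$ compact. Then $(QK)|_F$ is a compact operator on $F$ dominated by the lattice isomorphism $T^m|_F$, and it is nonzero because $Q$ is strictly positive and $Ke > 0$ (for any $y\geq 0$ one has $y\wedge je \to y$ in norm since $e$ is quasi-interior, so $Ke=0$ would force $K(y\wedge je) \leq jKe = 0$ and hence $K\equiv 0$ by continuity). If $F$ were diffuse, Theorem \ref{thm:VKdisjoint} would yield the contradiction $(QK)|_F = T^m|_F \wedge (QK)|_F = 0$. Hence $F$ contains an atom and Lemma \ref{lem:atomsfixeddiscrete}, applied to the irreducible lattice isomorphism $T|_F$ with bounded two-sided powers, produces $n \in \N$ with $T^n|_F = I_F$.

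To conclude, let $\mu \in \sigma_p(T_\C^n) \cap \Gamma$ and, by the preliminary observation, pick $\lambda\in\sigma_p(T_\C)\cap\Gamma$ with $\lambda^n = \mu$, together with an eigenvector $T_\C z = \lambda z$, $z\neq 0$. The last paragraph of the proof of Theorem \ref{thm:trivialspectrum}, which uses only $\vert\applied{T_\C^k z}{w'}\vert = \vert\applied{z}{w'}\vert$ for all $k\in\N_0$ and $w'\in(E_\C)'$ together with the fact that $Q_\C$ lies in the $\mathscr{L}_\sigma(E_\C)$-closure of $\{T_\C^k\}$, carries over verbatim: one first deduces $Q_\C z \neq 0$ using a norming $w'$, then $Q_\C z \in \lin\{z\}$ using $w'$ vanishing on $z$ and letting $\eps \to 0$, and finally $Q_\C z = z$ from the idempotency of $Q_\C$. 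Thus $z \in F_\C$, so $\lambda$ is an eigenvalue of $T|_{F_\C}$, and $T^n|_{F_\C} = I$ forces $\lambda^n = 1$, whence $\mu = 1$. The main obstacle compared to the continuous case is the irreducibility of $T|_F$ established in the second paragraph, which has no counterpart in the continuous-time proof but is essential for invoking Lemma \ref{lem:atomsfixeddiscrete}.
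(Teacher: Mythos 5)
Your proof is correct and follows essentially the same route as the paper: reduce peripheral eigenvalues of $T_\C^n$ to those of $T_\C$ via the factorization, pass to the Jacobs--de Leeuw--Glicksberg range $F=QE$, rule out diffuseness of $F$ with Theorem~\ref{thm:VKdisjoint}, invoke Lemma~\ref{lem:atomsfixeddiscrete} to get $T^n|_F=I$, and show eigenvectors lie in $F_\C$. The only (harmless) deviation is your irreducibility argument for $T|_F$, which uses the ideal generated in $E$ together with $Q$, where the paper instead applies the band projection onto $J$ to the fixed vector $e$ and uses the strictly positive functional $\phi$; both are valid.
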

\begin{proof}
	If $\sigma_p(T_\C)\cap \Gamma = \emptyset$, then the assertion follows for $n=1$.
	Thus, we may assume that there exists $\alpha \in \R$ and $z\in E_\C$ such 
	that $z\neq 0$ and $T_\C z = \exp (i\alpha) z$.
	Since 
	\[ 0<\vert z \vert = \vert \exp(in\alpha) z \vert = \vert T_\C^n z \vert \leq T^n \vert z\vert \quad(n\in\N),\]
	Lemma~\ref{lem:fixedpointadjoint} applied to the discrete semigroup 
	$\mathscr{T}\coloneqq (T^n)_{n\in\N_0}$ yields
	that there exists a strictly positive $\phi\in \fix(T')$ and $\fix(T) = \lin\{e\}$, where
	$e\coloneqq \vert z \vert$ is a quasi-interior point of $E_+$.
	In view of Lemma \ref{lem:harriscompact}, we may now assume that $T^m \geq K >0$ for a compact operator $K$.

	As in the proof of Theorem~\ref{thm:trivialspectrum}, we conclude from 
	Theorem~\ref{thm:JdLG} that there exists a strictly positive projection 
	$Q:E\to E$ commuting with $T$ and having the following properties: Its range $F\coloneqq QE$ 
	is a closed sublattice of $E$ with order continuous norm and includes $\fix(T)$.
	Moreover, the restriction of $T$ to $F$ is a 
	lattice isomorphism with $\sup\{ \Vert T_{\mid F}^k \Vert : k \in \Z\} < \infty$.
	On the other hand, $T^m_{\mid F}$ dominates the non-trivial compact operator $QK_{\mid F}$
	and it follows from Theorem \ref{thm:VKdisjoint} that $F$ is not diffuse.

	Next, we show that $T_{\mid F}$ is irreducible.
	Let $J \subseteq F$ be a closed and $T$-invariant ideal with corresponding band projection $P: F \to J$
	and let $e' \coloneqq Pe$.
	Since $Te' \in J$, we observe that
	\[ Te' = TPe = PTPe \leq PTe = Pe = e'.\]
	Now, it follows from $\applied{\phi}{e'-Te'}=0$ that $e' \in \fix(T) = \lin\{e\}$ because $\phi$ is strictly positive.
	Hence, $e'=0$ or $e'=e$ which implies that either $J=\{0\}$ or $J=F$.

	Now, we conclude from Lemma~\ref{lem:atomsfixeddiscrete},
	applied to the restriction of $T$ to $F$, that there exists some $n\in\N$ such that $F\subseteq \fix(T^n)$.
	Let $\exp(i\beta) \in \sigma_p(T_\C^n)$ and denote by
	$\xi_1,\dots,\xi_n$ the $n$th roots of $\exp(i\beta)$, 
	i.e.\  $(\xi_k)^n = \exp(i\beta)$ for all $1\leq k\leq n$. Then we infer from
	\[ \exp(i\beta)-T_\C^n = (\xi_1 - T_\C)(\xi_2-T_\C) \dots (\xi_n -T_\C) \]
	that $\xi_k \in \sigma_p(T_\C)$ for at least one $1\leq k\leq n$.
	Pick $y\in E_\C\backslash\{0\}$ such that $T_\C y = \xi_k y$.
	Now, we observe that $y\in F_\C = Q_\C E_\C$ by the same arguments
	as in the proof of Theorem~\ref{thm:trivialspectrum}. 
	Indeed, since $Q_\C$ is in the closure of $\mathscr{T}_\C = (T_\C^k)_{k\in\N_0}$ in $\mathscr{L}_\sigma(E_\C)$,
	for $w \in (E_\C)'$ satisfying $\applied{w}{y} = \lVert y\rVert$ we find some $j\in \N_0$
	such that \[\lvert \applied{T_\C^j y - Q_\C y}{w} \rvert < \lVert y\rVert.\]
	As $\lvert \applied{T_\C^j y}{w} \rvert = \lvert \xi_k^j \applied{y}{w}\rvert = \lVert y\rVert$,
	it follows that $\lvert \applied{Q_\C y}{w}\rvert >0$ and hence $Q_\C y \neq 0$.
	On the other hand, for every $w\in (E_\C)'$ vanishing on $y$ and for every $\eps>0$ there exists
	$j\in \N_0$ such that
	$\lvert \applied{T_\C^j y - Q_\C y}{w} \rvert < \eps$
	and hence $\lvert \applied{Q_\C y}{w}\rvert < \eps$. Thus, $Q_\C y \in \lin\{y\}$ which implies that
	$Q_\C y = y \in F_\C$ since $Q_\C$ is a projection.
	
	Altogether, we proved that
	\[ y = T_\C^n y = \xi_k^n y = \exp(i\beta) y \]
	and hence $\exp(i\beta)=1$.
\end{proof}

\begin{rem}
\label{rem:trivialspectrum}
\begin{enumerate}[(a)]
	\item The assertion of Theorem \ref{thm:trivialspectrumdiscrete} remains true for every positive, irreducible and Abel bounded
	Harris operator $T:E\to E$. This can be proven analogously to Proposition \ref{prop:trivialspectrumabel}.

	\item \label{it:strictlypositive} It is well known that $\sigma_p(T_\C)\cap \Gamma \subseteq \{1\}$ for every power
	bounded  operator $T:E\to E$ which is \emph{strongly positive}, i.e.\ $Tx$ is a quasi-interior point of $E_+$ for all $x>0$,
	c.f.\ \cite[Prop.\ V 5.6]{schaefer1974}. 
	This can also be obtained from the proof of Theorem \ref{thm:trivialspectrumdiscrete}
	by observing that such an operator $T$ is irreducible and its restriction to $F$ is again
	strongly positive and therefore no lattice isomorphism unless $F$ contains atoms.

	\item The assumption $\mathscr{T}$ being irreducible cannot be dropped
	in Theorem \ref{thm:trivialspectrum} (and Theorem \ref{thm:trivialspectrumdiscrete}).
	Indeed, let $(T_1(t))_{t\in [0,\infty)}$ be a positive and bounded
	strongly continuous semigroup on $E$ such that $T_1(t_0) \geq K > 0$ for some compact operator $K$
	and some $t_0>0$.
	Now, let $(T_2(t))_{t\in [0,\infty)}$ be another positive and bounded  strongly continuous semigroup on $E$
	such that $i\alpha$ is an eigenvalue of its generator for some $\alpha\neq 0$.
	Then $T(t) \coloneqq T_1(t) \oplus T_2(t)$ defines a positive and bounded
	strongly continuous semigroup on $E\oplus E$ 
	where $T(t_0)$ dominates the compact operator $K\oplus 0$ but 
	$i\alpha$ is an eigenvalue of its generator.

	\item If $A$ is the generator of a bounded and positive 
	strongly continuous semigroup $\mathscr{T}=(T(t))_{t\in [0,\infty)}$ 
	such that $T(t_0)$ is compact for some $t_0>0$, it is easy to see that $\sigma(A_\C)\cap i\R \subseteq \{0\}$.
	Indeed, in this case $\mathscr{T}$ is eventually norm continuous
	\cite[Lem.\ II 4.22]{nagel2000} and hence $\sigma(A_\C)\cap i\R$ is bounded \cite[Thm.\ II 4.18]{nagel2000}.
	Since the boundary spectrum is cyclic by \cite[Thm.\ 2.4]{greiner1981},
	it follows that $\sigma(A_\C) \cap i\R \subseteq \{0\}$. 
	\end{enumerate}
\end{rem}

It is natural to ask if even $\sigma(T_\C^n)\cap \Gamma$ is trivial for some $n\in\N$ under the assumptions of
Theorem~\ref{thm:trivialspectrumdiscrete}. The following example shows that this is not the case.

\begin{exa}
\label{ex:fullspectrum}
	Fix $1<p<\infty$ and let $E\coloneqq \ell^p$.
	We construct a positive and irreducible contraction $T$ on $E$ such that $\sigma_p(T_\C) = \emptyset$
	and $\sigma(T_\C^n)\cap \Gamma = \Gamma$ for all $n\in\N$. 
	Since $E$ is atomic, $T$ is automatically a kernel operator and hence satisfies 
	the assumptions of Theorem~\ref{thm:trivialspectrumdiscrete}.

	Let $(b_n) \subseteq [0,1]$ be a decreasing sequence with $\prod_{n=1}^\infty (1-b_n) = \frac{1}{2}$
	and let $0 \leq a_n \leq 1-(1-b_n)^p$ be small enough such that $\sum_{n=1}^\infty a_n \leq 1$.
	For $x=(x_n) \in \ell^p$ we define
	\[ Tx \coloneqq \biggr( \sum_{n=1}^\infty a_n x_n,\, (1-b_1)x_1,\, (1-b_2)x_2,\, (1-b_3)x_3,\, \dots \biggr).\]
	If $x\geq 0$, then it follows from Jensen's inequality that
	\begin{align*}
		\Vert Tx \Vert^p &= \biggr(\sum_{n=1}^\infty a_nx_n \biggr)^p + \sum_{n=1}^\infty (1-b_n)^p x_n^p 
		\leq \sum_{n=1}^\infty a_n x_n^p + \sum_{n=1}^\infty (1-b_n)^p x_n^p \leq \Vert x\Vert^p
	\end{align*}
	which shows that $T$ is a contraction. 

	It is well-known that the complexification of $E$ equals $\ell^p(\C)$, the $p$-summable sequences in $\C$.
	Now, assume that there is $\lambda \in \sigma_p(T_\C)$ and denote by $z=(z_n) \in E_\C$, $z\neq 0$, 
	a corresponding eigenvector.
	Since $T_\C$ is injective and contractive, $0 < \vert\lambda\vert \leq 1$ and it follows by induction that
	\[ z_{n+1} = \frac{1}{\lambda^n} (1-b_1)(1-b_2) \dots (1-b_n)z_1 \]
	for all $n\in \N$. Hence, $z$ is not a null sequence. This is impossible and therefore
	$\sigma_p(T_\C)=\emptyset$.

	In order to calculate the peripheral spectrum of $T_\C$, we first point out
	that the discrete semigroup $(T_\C^n)_{n\in\N_0}$ is not strongly stable.
	Indeed, for $e_1 = (1,0,0,\dots)$ we have that 
	\[ \Vert T^n e_1 \Vert \geq \prod_{k=1}^n (1-b_k) \geq \frac{1}{2} \]
	for all $n\in\N$.
	Thus, it follows from the characterization of strong stability by Arendt, Batty, Lyubich and V\~{u}
	\cite[Thm.\ 5.1]{arendt1988} that $\sigma(T_\C)\cap \Gamma$ is uncountable. 
	Since $\sigma(T_\C)\cap \Gamma$ is cyclic \cite[Thm.\ V 4.9]{schaefer1974}, it is dense in $\Gamma$ and hence
	$\sigma(T_\C)\cap \Gamma = \Gamma$. By factorizing $\lambda - T_\C^n$ as in the proof of Theorem \ref{thm:trivialspectrumdiscrete}
	one observes that also $\sigma(T_\C^n)\cap \Gamma = \Gamma$ for all $n\in\N$.	
\end{exa}

\section{Strong Convergence of the Semigroup}
\label{sec:convergence}

The asymptotic behavior of a bounded strongly continuous semigroup $\mathscr{T}=(T(t))_{t\in [0,\infty)}$
on a complex Banach space $X$
is highly related to the peripheral (point) spectrum of its generator $A$.
If, for instance, $\mathscr{T}$ is known to be \emph{asymptotically almost periodic},
i.e.\ $X=X_0\oplus X_\mathrm{AP}$ where
\[ X_0 \coloneqq \{ x\in X : \lim_{t\to\infty} \Vert T(t)x \Vert  = 0\} \]
and
\[ X_{\mathrm{AP}} \coloneqq \overline{\lin} \{ x\in D(A) : Ax = i\alpha x \text{ for some } \alpha \in \R\},\]
then $\sigma_p(A)\cap i\R \subseteq \{0\}$ already implies that
$\lim_{t\to\infty} T(t)x$ exists for all $x\in X$.
This is the case if the generator $A$ has compact resolvent \cite[Prop.\ 5.4.7]{arendt2001}
or if the peripheral spectrum $\sigma(A)\cap i\R$ is countable and $\mathscr{T}$ is
\emph{totally ergodic} \cite[Thm.\ 5.5.5]{arendt2001}.

In the following, we prove a convergence result for semigroups of Harris operators 
with a non-trivial fixed space by adjusting techniques developed by Greiner to arbitrary operator semigroups
that might by discrete or strongly continuous.
Our main tool is Greiner's zero-two law, see Theorem \ref{thm:zerotwo} in the appendix.

Again, let $E$ be a Banach lattice with order continuous norm.
We start with the following proposition, a generalized version of \cite[Kor.\ 3.9]{greiner1982}, 
that yields sufficient conditions for strong convergence of a semigroup.

\begin{prop}
	\label{prop:convergence}
	Let $\mathscr{T} = (T(t))_{t\in R}$ be a positive, bounded and irreducible
	semigroup on $E$ which is strongly continuous or discrete
	such that $\fix(T(t))=\fix(\mathscr{T}) \not= \{0\}$ for all $t\in R$
	and assume that there are $r,s \in R$, $r>s$, such that $T(r)\wedge T(s) >0$.
	Then
	\[ \lim_{t\to\infty} T(t)x = \applied{x'}{x} e \quad (x\in E)\]
	for some strictly positive $x' \in \fix(\mathscr{T'})$ and a quasi-interior point $e \in \fix(\mathscr{T})$ of $E_+$.
\end{prop}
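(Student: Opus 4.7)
The plan is to construct the expected limit projection $P$ and then show strong convergence $T(t)\to P$ by combining the mean ergodic decomposition with Greiner's zero--two law.

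First, since $\fix(\mathscr{T})$ is non-trivial, I choose some $z\in \fix(\mathscr{T})$, $z>0$, and apply Lemma~\ref{lem:fixedpointadjoint} to obtain a strictly positive $x'\in \fix(\mathscr{T}')$, the identification $\fix(\mathscr{T})=\lin\{e\}$, and $e$ a quasi-interior point of $E_+$. After normalizing so that $\applied{x'}{e}=1$, I define $Px\coloneqq\applied{x'}{x}\,e$; this is a positive projection onto $\fix(\mathscr{T})$ and is the natural candidate for the limit. Setting $h\coloneqq r-s>0$, the hypothesis $\fix(T(h))=\fix(\mathscr{T})$ together with $e\in \fix(T(h))$ allows me to invoke Lemma~\ref{lem:meanergodic} for the single operator $T(h)$, which yields the mean ergodic decomposition
\[ E = \fix(\mathscr{T}) \oplus \overline{(I-T(h))E}, \]
and whose associated algebraic projection onto the first summand coincides with $P$ (since $P$ fixes $e$ and annihilates every element of $(I-T(h))E$, as $\applied{x'}{(I-T(h))y}=0$).

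Because $T(t)Px = Px$ for all $t\in R$, it remains to show that $T(t)y \to 0$ strongly for every $y$ in $\ker P = \overline{(I-T(h))E}$. By the uniform boundedness of $\mathscr{T}$, it suffices to verify this on the dense subspace $(I-T(h))E$, i.e.\ to prove that for every $z\in E$,
\[ T(t)z - T(t+h)z \to 0 \quad \text{as } t\to\infty. \]

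This is precisely where Greiner's zero--two law (Theorem~\ref{thm:zerotwo}) enters: the non-disjointness assumption $T(r)\wedge T(s)>0$ rules out the ``two'' alternative of the dichotomy and provides the asymptotic flatness $T(t+h)z-T(t)z\to 0$ as $t\to\infty$. In the strongly continuous case, if the zero--two law initially only produces convergence along a discrete sequence of times (e.g.\ the iterates of $T(h)$ acting on the orbit of $z$), I bridge to arbitrary $t\to\infty$ by exploiting uniform boundedness of $\mathscr{T}$ and strong continuity. Combining this with the preceding reductions gives $T(t)x\to Px=\applied{x'}{x}e$ for every $x\in E$, as desired. The main obstacle I anticipate is precisely the zero--two step: extracting strong (not merely weak or Ces\`aro) convergence on all of $\ker P$ and ensuring the statement holds for all $t\to\infty$ uniformly in both the discrete and the strongly continuous cases; the common parameter range $R$ is designed to make this compatible, but care will be needed when translating the zero--two dichotomy from the pair $\{T(r),T(s)\}$ into a genuine convergence statement about the tail of the orbit.
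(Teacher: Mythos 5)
Your proposal is correct and follows essentially the same route as the paper: Lemma~\ref{lem:fixedpointadjoint} to produce $x'$ and $e$, the mean ergodic decomposition $E=\fix(T(\tau))\oplus\overline{(I-T(\tau))E}$ for $\tau=r-s$ via Lemma~\ref{lem:meanergodic}, and Greiner's zero--two law (with irreducibility forcing $E_2=\{0\}$) to get $\lvert T(t)-T(t+\tau)\rvert e\to 0$ and hence $T(t)\to 0$ on the second summand. Your worry about bridging from discrete to continuous times is unnecessary, since Theorem~\ref{thm:zerotwo} already yields convergence as $t\to\infty$ in $R$.
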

\begin{proof}
	 By Lemma~\ref{lem:fixedpointadjoint}, there exists a strictly positive $x'\in \fix(\mathscr{T'})$ and a 
	quasi-interior point $e$ of $E_+$ with $\fix(\mathscr{T}) = \lin\{e\}$.
	We may assume that $\applied{x'}{e} = 1$.
	By assumption, 
	\[ E_2 \coloneqq \{ y \in E : (T(t)\wedge T(t+\tau)) \vert y\vert = 0 \text{ for all } t\geq 0\} \neq E\]
	for $\tau \coloneqq r-s$.
	Since $\mathscr{T}$ is irreducible, it follows from Greiner's zero-two law,
	Theorem~\ref{thm:zerotwo}, that $E_2=\{0\}$ and
	\[ \lim_{t\to\infty} \vert T(t) - T(t+\tau)\vert e = 0. \]
	Hence,
	\[ \vert T(t)(I-T(\tau)) y \vert \leq \vert T(t)-T(t+\tau)\vert e \to 0 \quad (t\to\infty) \]
	for all $y\in [-e,e]$, i.e.\ $\lim T(t)z =0$ for all $z\in D\coloneqq (I-T(\tau))[-e,e]$. 
	As $D$ is total in $\overline{(I-T(\tau))E}$ and $\mathscr{T}$ is bounded, $\lim_{t\to\infty} T(t)x = 0$
	for all $x\in \overline{(I-T(\tau))E}$.
	By Lemma~\ref{lem:meanergodic}, $T(\tau)$ is mean ergodic, i.e.\
	\[ E = \fix(T(\tau)) \oplus \overline{(I-T(\tau))E}.\]
	Since $\fix(T(\tau))=\lin\{e\}$, the corresponding mean ergodic projection
	is given by $x'\otimes e$ which completes the proof.
\end{proof}

It seems to be rather technical to assume that
two operators $T(r)$ and $T(s)$ are disjoint.
However, by Theorem \ref{thm:axmann} due to Axmann, this holds if the semigroup 
contains an irreducible Harris operator. This leads us to the following theorems, 
our main results in this section.

\begin{theorem}
	\label{thm:convergencecontinuous}
	Let $\mathscr{T} = (T(t))_{t\in[0,\infty)}$ be a positive, bounded,
	irreducible and strongly continuous semigroup on $E$ with generator $A$ 
	such that $\fix(\mathscr{T}) \not= \{0\}$.
	If $T(t_0)$ is a Harris operator for some $t_0>0$, then
	\[ \lim_{t\to\infty} T(t)x = \applied{x'}{x} e \quad (x\in E)\]
	for some strictly positive $x' \in \fix(\mathscr{T'})$ and a quasi-interior point $e \in \fix(\mathscr{T})$ of $E_+$.
\end{theorem}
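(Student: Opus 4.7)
My plan is to reduce the theorem to Proposition~\ref{prop:convergence} by verifying its two outstanding hypotheses: that $\fix(T(t))=\fix(\mathscr{T})$ for every $t\in[0,\infty)$, and that $T(r)\wedge T(s) >0$ for some $r>s$.

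Because $\mathscr{T}$ is bounded it is automatically Abel bounded (from the Laplace transform estimate $\lVert\lambda(\lambda-A_\C)^{-1}\rVert\leq\sup_t\lVert T(t)\rVert$), so Proposition~\ref{prop:trivialspectrumabel} applies and yields $\sigma_p(A_\C)\cap i\R\subseteq\{0\}$. This is the sole spectral input needed before attacking the two hypotheses.

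To show $\fix(T(t_1))=\fix(\mathscr{T})$ at an arbitrary $t_1>0$, fix $z\in\ker(T(t_1)-I)$ and set $V\coloneqq\overline{\lin}\{T(s)z : s\geq 0\}$. Because $T(t_1)$ commutes with every $T(s)$ and fixes $z$, one has $T(t_1)|_V=I$, so $(T(t)|_V)_{t\geq 0}$ extends to a bounded strongly continuous group on $V$ of period $t_1$. On the complexification define the Fourier projections
\[ P_n \coloneqq \frac{1}{t_1}\int_0^{t_1} e^{-2\pi i n s/t_1}\, T(s)_\C|_{V_\C}\dx s \qquad (n\in\Z). \]
A change of variables combined with the identity $T(s+t_1)|_V=T(s)|_V$ gives $T(t)_\C P_n = e^{2\pi i n t/t_1} P_n$, so $P_n V_\C\subseteq\ker(A_\C-2\pi i n/t_1)$. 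The vector-valued Fej\'er theorem applied to the continuous $t_1$-periodic function $s\mapsto T(s)_\C v$ shows that every $v\in V_\C$ lies in the norm closure of $\lin\bigcup_n P_n V_\C$. The triviality of the peripheral point spectrum of $A_\C$ then forces $P_n=0$ for all $n\neq 0$; hence $P_0|_{V_\C}=I$, so $V\subseteq\ker A=\fix(\mathscr{T})$ and in particular $z\in\fix(\mathscr{T})$.

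For the disjointness hypothesis I invoke Axmann's theorem (Theorem~\ref{thm:axmann}), which under the irreducibility of $\mathscr{T}$ and the Harris property of $T(t_0)$ produces some $r>s$ with $T(r)\wedge T(s)>0$. Proposition~\ref{prop:convergence} then applies and delivers the claimed strong limit together with the strictly positive $x'\in\fix(\mathscr{T}')$ and the quasi-interior fixed point $e$. The principal obstacle is the middle paragraph: the Fej\'er decomposition of the orbit closure and the identification of the restricted generator's eigenspaces with subspaces of $\ker(A_\C-2\pi i n/t_1)$ are where the triviality of the peripheral point spectrum is actually cashed in. An equally valid alternative would be to apply the Jacobs--de Leeuw--Glicksberg splitting theorem (Theorem~\ref{thm:JdLG}) on $V$ and use the triviality result to collapse the reversible part onto $\fix(\mathscr{T})$.
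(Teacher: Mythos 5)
Your overall route coincides with the paper's: establish $\sigma_p(A_\C)\cap i\R\subseteq\{0\}$, deduce $\fix(T(t))=\fix(\mathscr{T})$ for all $t$, obtain non-disjointness of two operators of the semigroup from Axmann's theorem, and conclude via Proposition~\ref{prop:convergence}. Your Fej\'er argument for $\fix(T(t_1))=\fix(\mathscr{T})$ is correct and is essentially a self-contained proof of the fact the paper simply cites from Engel--Nagel, and using Proposition~\ref{prop:trivialspectrumabel} instead of Theorem~\ref{thm:trivialspectrum} is a legitimate variant.

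There is, however, one genuine gap: your application of Theorem~\ref{thm:axmann}. That theorem is stated for a single positive operator $T$ that is irreducible \emph{as an operator}, i.e.\ the discrete semigroup $(T^n)_{n\in\N_0}$ admits no non-trivial closed invariant ideal. Irreducibility of the strongly continuous semigroup $\mathscr{T}$ does \emph{not} by itself yield irreducibility of the single operator $T(t_0)$: a closed ideal invariant under $T(t_0)$ need not be invariant under every $T(t)$ (compare the rotation group on $L^p$ of the circle, which is irreducible while $T(2\pi)=I$ is not). So the phrase ``under the irreducibility of $\mathscr{T}$ \dots\ produces some $r>s$'' skips a necessary step. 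The fix is short and you already have all the ingredients: with $x'\in\fix(\mathscr{T}')$ strictly positive and $\fix(T(t_0))=\fix(\mathscr{T})=\lin\{e\}$ for a quasi-interior point $e$, take any closed $T(t_0)$-invariant ideal $J$ with band projection $P$; then $T(t_0)Pe=PT(t_0)Pe\leq PT(t_0)e=Pe$, and $\applied{x'}{Pe-T(t_0)Pe}=0$ forces $Pe\in\fix(T(t_0))=\lin\{e\}$, whence $Pe=0$ or $Pe=e$ and so $J=\{0\}$ or $J=E$. (One should also first replace $t_0$ by a multiple so that $T(t_0)\notin(E'\otimes E)^\bot$, since Theorem~\ref{thm:axmann} requires the operator itself, not merely a power, to be non-disjoint from the kernel operators.) With these two points supplied, your argument is complete.
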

\begin{proof}
	By Lemma~\ref{lem:fixedpointadjoint}, $\fix(\mathscr{T})=\lin\{e\}$ for a quasi-interior point $e$ of $E_+$
	and there exists a strictly positive element $x' \in \fix(\mathscr{T'})$.
	Hence, Theorem~\ref{thm:trivialspectrum} implies that
	$\sigma_p(A_\C)\cap i\R=\{0\}$ and therefore
	\begin{align}
		\fix(T(t))=\fix(\mathscr{T}) = \lin\{e\} \quad (t>0) \label{eqn:equalfixedspaces}
	\end{align}
	(see \cite[Cor.\ IV 3.8]{nagel2000}).
	Next, we prove that $T\coloneqq T(t_0)$ is irreducible.
	Let $J\subseteq E$ be a $T$-invariant closed ideal and $P:E\to J$ the corresponding
	band projection. Then $TPe \in J$ and hence $TPe = PTPe \leq PTe = Pe$. 
	Since $\applied{x'}{Pe-TPe}=0$ and $x'$ is strictly positive, it follows that
	$Pe \in \fix(T) =\lin\{e\}$ which implies that $J=\{0\}$ or $J=E$.
	Therefore, it follows from Theorem~\ref{thm:axmann} that
	there exist some natural numbers $n<m$ such that $T^n\wedge T^m > 0$.
	Now, the assertion follows from Proposition \ref{prop:convergence}.
\end{proof}

In the discrete case, we obtain strong convergence of a subsequence $(T^{nk})_{k\in\N}$ for a fixed $n\in\N$
which is optimal in view of Example \ref{ex:matrix}.
If the operator is not only irreducible but even strongly positive, the sequence $(T^k)_{k\in\N}$ itself 
converges strongly to a projection of rank one.

\begin{theorem}
	\label{thm:discreteconvergence}
	Let $T:E\to E$ be a positive, power bounded
	and irreducible Harris operator with $\fix(T)\not=\{0\}$.
	Then there exists $n\in\N$ such that $T^{nk}$ converges strongly as $k$ tends to infinity.

	If $T$ is even strongly positive, i.e.\ $Tx$ is a quasi-interior point of $E_+$ for all $x>0$, 
then $\lim_{k\to\infty} T^k x = \applied{x'}{x} e$ for some strictly positive $x' \in \fix(T')$
and a quasi-interior point $e \in \fix(T)$ of $E_+$.
\end{theorem}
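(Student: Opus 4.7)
The plan is to adapt the proof of Proposition~\ref{prop:convergence} to the discrete setting, the crucial point being that, although one cannot in general ensure $\fix(T^k) = \fix(T)$ for all $k$ (so Proposition~\ref{prop:convergence} is not directly applicable), the same machinery nonetheless yields strong convergence of the subsequence $(T^{k\tau})_k$ for a suitable $\tau$ obtained from Axmann's theorem. Identifying this subsequence and recognising that Proposition~\ref{prop:convergence}'s argument goes through without any rank-one structure of the limit is the main subtlety.

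I would first apply Lemma~\ref{lem:fixedpointadjoint} to the discrete semigroup $\mathscr{T} = (T^k)_{k\in\N_0}$, which is irreducible and bounded with $\fix(\mathscr{T}) = \fix(T) \neq \{0\}$; this yields a strictly positive $x' \in \fix(T')$ and $\fix(T) = \lin\{e\}$ for a quasi-interior point $e \in E_+$. Since $T$ is an irreducible Harris operator, Theorem~\ref{thm:axmann} supplies natural numbers $r > s$ with $T^r \wedge T^s > 0$, and I would set $\tau \coloneqq r - s$.

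The closed $\mathscr{T}$-invariant ideal $E_2 \coloneqq \{ y \in E : (T^k \wedge T^{k+\tau})|y| = 0 \text{ for all } k \geq 0\}$ is distinct from $E$, since $T^s \wedge T^{s+\tau} = T^s \wedge T^r > 0$. Irreducibility of $\mathscr{T}$ therefore forces $E_2 = \{0\}$, and Greiner's zero-two law (Theorem~\ref{thm:zerotwo}) delivers $\lim_{k\to\infty}|T^k - T^{k+\tau}|e = 0$. For $y \in [-e,e]$, the estimate $|T^k(I - T^\tau)y| \leq |T^k - T^{k+\tau}|e \to 0$, together with the totality of $(I-T^\tau)[-e,e]$ in $\overline{(I-T^\tau)E}$ (a consequence of the density of $E_e$ in $E$) and the boundedness of $\mathscr{T}$, yields $T^k z \to 0$ for every $z \in \overline{(I-T^\tau)E}$. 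Since $e \in \fix(T^\tau)$ is quasi-interior, Lemma~\ref{lem:meanergodic} makes $T^\tau$ mean ergodic, so $E = \fix(T^\tau) \oplus \overline{(I-T^\tau)E}$. Decomposing $x = f + z$ accordingly gives $T^{k\tau} x = f + T^{k\tau}z \to f$, so $T^{k\tau}$ converges strongly and the first assertion holds with $n = \tau$.

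For the strongly positive case, Remark~\ref{rem:trivialspectrum}\eqref{it:strictlypositive} supplies both the irreducibility of $T$ and the inclusion $\sigma_p(T_\C) \cap \Gamma \subseteq \{1\}$. The factorisation $I - T_\C^k = \prod_{\alpha^k = 1}(\alpha I - T_\C)$ combined with a partial-fraction decomposition forces $\fix(T^k) = \fix(T) = \lin\{e\}$ for every $k \in \N$, so all hypotheses of Proposition~\ref{prop:convergence} are satisfied for $(T^k)_{k\in\N_0}$ with $r > s$ again from Axmann, and the conclusion $\lim_k T^k x = \applied{x'}{x}e$ follows.
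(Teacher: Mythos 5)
Your proposal is correct and follows essentially the same route as the paper: Axmann's theorem produces $a<b$ with $T^a\wedge T^b>0$, the zero-two law and mean ergodicity of $T^{b-a}$ give the splitting $E=\fix(T^{b-a})\oplus\overline{(I-T^{b-a})E}$ on which $T^{k(b-a)}$ converges, and in the strongly positive case Remark~\ref{rem:trivialspectrum}(\ref{it:strictlypositive}) yields $\fix(T^k)=\fix(T)$ so that Proposition~\ref{prop:convergence} applies directly. You have merely written out explicitly the steps the paper compresses into ``we conclude as in the proof of Proposition~\ref{prop:convergence}.''
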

\begin{proof}
	By Theorem~\ref{thm:axmann}, there are natural numbers
	$a<b$ such that $T^a\wedge T^b > 0$. Let $n\coloneqq b-a$.
	Now, we conclude as in the proof of Proposition \ref{prop:convergence} that 
	\[ E = \fix(T^n) \oplus \overline{(I-T^n)E} \]
	and $\lim_{k\to\infty} T^kx=0$ for all $x\in\overline{(I-T^n)E}$.

	If $T$ is strongly positive, then $\sigma_p(T_\C)\cap \Gamma \subseteq \{1\}$ by Remark \ref{rem:trivialspectrum} 
	(\ref{it:strictlypositive}). Therefore, $\fix(T^k) = \fix(T)$ for all $k\in\N$ and 
	the assertion follows immediately from Proposition \ref{prop:convergence}.
\end{proof}

\begin{rem}
It is natural to ask whether Theorem \ref{thm:axmann}
holds true for positive and irreducible operators that merely dominate non-trivial compact operators,
which would allows us to generalize Theorem \ref{thm:convergencecontinuous} and \ref{thm:discreteconvergence}
accordingly.

This is not the case since there exists a positive, compact and irreducible 
operator $T$ on $L^2(\Gamma)$, the square-integrable functions on the unit circle endowed with
the Lebesgue measure, such that $T^n \wedge T^m = 0$ for all $n\neq m$.
In fact, based on a work of Varopoulos \cite{varopoulos1966}, Arendt constructed a self-adjoint and compact 
Markov operator $T$ on $L^2(\Gamma)$ such that $T^n \wedge T^m = 0$
whenever $n\neq m$ \cite[Ex.\ 3.7]{arendt1981}. By Theorem \ref{thm:irreduciblecomponent} 
below, there exists a $T$-invariant
band $B$ in $L^2(\Gamma)$ such that the restriction of $T$ to $B$ is irreducible and still compact.
Since $B$ is of the form $\{ f\in L^2(\Gamma) : f=0 \text{ on } A \}$
for some measurable $A\subseteq \Gamma$ by \cite[III \S1 Ex.2]{schaefer1974} and $\Gamma\backslash A$
is not a nullset, $B$ is in turn isomorphic to $L^2(\Gamma)$  by 
\cite[Cor.\ 6.6.7 and Thm.\ 9.2.2]{bogachev2007}.
\end{rem}

\begin{theorem}
\label{thm:irreduciblecomponent}
	Let $T:E\to E$ be a positive and compact operator. If there exist a quasi-interior point $e\in \fix(T)$ 
	of $E_+$ and a strictly positive $\phi \in \fix(T')$, then there are finitely many disjoint $T$-invariant bands
	$B_1,\dots,B_N \subseteq E$ distinct from $\{0\}$ such that 
	$E= B_1 \oplus \dots \oplus B_N$
	and the restriction of $T$ to $B_k$ is irreducible for all $k=1,\dots,N$.
\end{theorem}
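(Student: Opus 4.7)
The plan is to decompose $E$ using the lattice structure of the fixed space. First I would show that $\fix(T)$ is a finite-dimensional sublattice of $E$. Finite-dimensionality follows from Riesz--Schauder applied to the compact operator $I-T$. For the sublattice property, for $x\in\fix(T)$ one has $\lvert x\rvert=\lvert Tx\rvert\leq T\lvert x\rvert$, and then $\applied{\phi}{T\lvert x\rvert-\lvert x\rvert}=0$ together with strict positivity of $\phi$ forces $T\lvert x\rvert=\lvert x\rvert$, so $\lvert x\rvert\in\fix(T)$. Being a finite-dimensional Archimedean vector lattice, $\fix(T)$ is isomorphic to $\R^N$ for some $N\in\N$, so its lattice atoms furnish a basis $e_1,\dots,e_N$ of pairwise disjoint positive fixed vectors.

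Next I would set $B_k\coloneqq\{e_k\}^{\perp\perp}$. These bands are pairwise disjoint since the $e_k$ are, and each is nonzero. Writing $e=\sum_k\lambda_k e_k$, every $\lambda_k>0$: indeed $\lambda_k\geq 0$ because $e\geq 0$ and the $e_k$ are disjoint, while $\lambda_k=0$ would give $e\perp e_k$, contradicting that the quasi-interior point $e$ is a weak order unit and $e_k>0$. Hence $e\in B_1\oplus\dots\oplus B_N$, and since this sum is a band containing the weak order unit $e$, it equals $E$. The $T$-invariance of $B_k$ is straightforward on the principal ideal $E_{e_k}$: if $\lvert x\rvert\leq ce_k$ then $\lvert Tx\rvert\leq T\lvert x\rvert\leq cTe_k=ce_k$. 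By order continuity of the norm, $x\wedge ne_k\uparrow x$ converges in norm for $x\in(B_k)_+$, so $E_{e_k}$ is norm-dense in $B_k$; closedness of $B_k$ then yields $TB_k\subseteq B_k$.

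Finally I would prove that $T|_{B_k}$ is irreducible. Note $\fix(T|_{B_k})=\fix(T)\cap B_k=\lin\{e_k\}$, since the elements of $\fix(T)=\lin\{e_1,\dots,e_N\}$ lying in the band generated by $e_k$ are exactly the scalar multiples of $e_k$ (using pairwise disjointness of the $e_j$). Let $J\subseteq B_k$ be a closed $T$-invariant ideal; by order continuity $J$ is a band with band projection $P$. Setting $f\coloneqq Pe_k\leq e_k$, we have $Tf\in J$ and $Tf\leq Te_k=e_k$, whence $Tf=PTf\leq PTe_k=Pe_k=f$. Applying the strictly positive functional $\phi$ to $f-Tf\geq 0$ yields $Tf=f$, so $f\in\lin\{e_k\}$, say $f=\lambda e_k$ with $\lambda\in[0,1]$. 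The disjointness $Pe_k\wedge(I-P)e_k=0$ forces $\min(\lambda,1-\lambda)e_k=0$, so $\lambda\in\{0,1\}$: if $\lambda=0$ then $J=\{0\}$ since $e_k$ is a weak order unit of $B_k$, and if $\lambda=1$ then $e_k\in J$, so $J=B_k$.

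The main obstacle is conceptual rather than technical: recognizing that the two hypotheses combine in just the right way, with compactness yielding finite-dimensionality of $\fix(T)$ and the strictly positive $\phi$ turning $\fix(T)$ into a sublattice, so that the Perron--Frobenius-style decomposition of $\fix(T)$ into disjoint atoms can be lifted via the bands $\{e_k\}^{\perp\perp}$ to a decomposition of $E$. Everything else reduces to repeated applications of strict positivity of $\phi$ together with order continuity of the norm, carried out band by band.
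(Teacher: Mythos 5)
Your argument is correct, and it takes a genuinely different route from the paper. The paper proceeds twice by contradiction: assuming no irreducible invariant band exists, it builds an infinite decreasing chain of invariant bands $A_n$ with $\applied{P_ne}{\phi}\leq 2^{-n}\applied{e}{\phi}$ and applies compactness of $T$ to the normalized fixed vectors $P_ne/\lVert P_ne\rVert$ to reach a contradiction; a second compactness argument of the same flavor shows the resulting collection of irreducible bands is finite. You instead use compactness only once, to get $\dim\fix(T)<\infty$ (your phrasing ``Riesz--Schauder applied to the compact operator $I-T$'' is slightly off --- $I-T$ is not compact --- but the intended fact, that $\ker(I-T)$ is finite-dimensional for compact $T$, is exactly what Riesz theory gives), and then the strictly positive $\phi$ makes $\fix(T)$ a sublattice, hence lattice-isomorphic to $\R^N$; the bands are identified explicitly as $B_k=\{e_k\}^{\bot\bot}$ for the disjoint atoms $e_k$ of $\fix(T)$. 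This buys several things the paper's proof does not make visible: the decomposition is constructive, $N=\dim\fix(T)$, each $B_k$ carries exactly one fixed atom, and compactness of $T$ is only used through the finite-dimensionality of $\fix(T)$, so your argument actually proves the slightly stronger statement in which ``$T$ compact'' is replaced by ``$\dim\fix(T)<\infty$''. All the supporting steps check out: disjointness of the $e_k$ in $\fix(T)$ passes to $E$ because $\fix(T)$ is a sublattice; $e=\sum\lambda_ke_k$ with $\lambda_k>0$ since $e$ is a weak order unit, so $E=\{e\}^{\bot\bot}=B_1\oplus\dots\oplus B_N$; and your irreducibility argument for $T|_{B_k}$ via $\fix(T)\cap B_k=\lin\{e_k\}$ is the same band-projection trick the paper itself uses elsewhere (e.g.\ in Theorem~\ref{thm:trivialspectrumdiscrete}).
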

\begin{proof}
	We assume that there is no $T$-invariant band $A\subseteq E$ except $\{0\}$
	such that the restriction of $T$ to $A$ is irreducible.
	Then, in particular, $T$ is not irreducible on $E$ and hence there is a $T$-invariant
	closed ideal $A_1$ distinct from $\{0\}$ and $E$. As the norm on $E$ is order continuous,
	every closed ideal is a projection band.
	Denote by $P_1: E \to A_1$ the corresponding band projection. Since 
	\[TP_1e = P_1TP_1e \leq P_1Te = P_1e\]
	and $\applied{P_1e - TP_1e}{\phi} = 0$, the strict positivity of $\phi$ implies that $TP_1e = P_1e$. 
	By linearity, $T(I-P_1)e = (I-P_1)e$ and thus
	$(I-P_1)E = A_1^\bot$ is a non-trivial $T$-invariant band, too. We may assume that $\applied{\phi}{e}=1$ and
	\[  \applied{P_1e}{\phi}\leq \frac{1}{2} \applied{e}{\phi};\]
	otherwise we replace $A_1$ with $A_1^\bot$ and $P_1$ with $I-P_1$.

	By our assumption, $T_{\mid A_1}$ is not irreducible and hence we find a $T$-invariant band $A_2 \subseteq A_1$ 
	distinct from $\{0\}$ and $A_1$ with band projection $P_2 : E \to A_2$ such that
	\[ \applied{P_2  e}{\phi} \leq \frac{1}{2} \applied{P_1 e}{\phi}.\]
	Inductively, we obtain a decreasing sequence $A_{n+1} \subseteq A_n$ of $T$-invariant bands with projections $P_n: E \to A_n$
	satisfying
	\[ \applied{P_n e }{\phi} \leq 2^{-n} \applied{e}{\phi} \quad (n\in\N).\]
	Since $\phi$ is strictly positive and the norm on $E$ is order continuous, this implies that $\lim P_ne =\inf P_n e = 0$.

	Now, consider the sequence $x_n \coloneqq P_n e/\Vert P_ne \Vert$. The compactness of $T$ yields that
	a subsequence of $(Tx_n) = (x_n)$ converges to some $x$ with $\Vert x\Vert = 1$.
	On the other hand, $x\in A_n$ for every $n\in\N$ because $x_k \in A_n$ whenever $k\geq n$. Thus, 
	$e = \lim(I-P_n)e \in \{x\}^\bot$ which implies that $x=0$, a contradiction.

	We proved the existence of a $T$-invariant band $\{0\} \neq B_1\subseteq E$ such that $T_{\mid B_1}$ is irreducible.
	If $B_1 \neq E$ we may apply the same argument to the restriction of $T$ to $B_1^\bot$ to obtain a $T$-invariant
	band $\{0\} \neq B_2 \subseteq B_1^\bot$ such that $T_{\mid B_2}$ is irreducible. 
	
	Continue this construction inductively as long as $B_1 \oplus \dots \oplus B_n \neq E$.
	Suppose that this process does not terminate after finitely many steps, i.e.\
	we obtain an infinite sequence $B_n$ of disjoint non-trivial bands such that $T_{\mid B_n}$ is irreducible. Denote by $Q_n : E \to B_n$
	the corresponding band projections and let $y_n \coloneqq Q_n e / \Vert Q_n e\Vert$.
	Then a subsequence of $T y_n = y_n$ converges to some $y\in E$ with $\Vert y\Vert = 1$ since $T$ is compact.
	On the other hand, $Q_k y = \lim_{n\to\infty} Q_k y_n = 0$ for every $k\in\N$ implies that
	$y\in B_k^\bot$ for all $k\in\N$. This shows that every $y_k$ is contained in $\{y\}^\bot$ and so is $y$. 
	Hence, $y=0$ contradicting $\Vert y\Vert =1$.

	We conclude that the process of constructing $B_1,B_2,\dots$ ends after finitely many steps, which completes the proof.
\end{proof}

\appendix

\section{Greiner's zero-two law}
\label{sec:zerotwo}

We present the proof of Greiner's zero-two law from \cite{greiner1982}
in a reformulation for Banach lattices with order continuous norm and without any 
continuity condition on the semigroup.

Throughout, let $\mathscr{T}=(T(t))_{t\in R}$ be a positive and bounded semigroup on $E$,
a Banach lattice with order continuous norm, and fix $\tau >0$.

\begin{theorem}[Greiner's zero-two law]
\label{thm:zerotwo}
	Assume that $\fix(\mathscr{T})$ contains a quasi-interior point $e$ of $E_+$
	and that there exists a strictly positive element in $\fix(\mathscr{T}')$.
	Then 
	\[ E_2 \coloneqq \{ y\in E : (T(t)\wedge T(t+\tau))\vert y\vert = 0 \text{ for all } t\in R \} \]
	and $E_0\coloneqq E_2^\bot$ are $\mathscr{T}$-invariant bands. Moreover, 
	if $P$ denotes the band projection onto $E_2$, then
	\[ \vert T(t) - T(t+\tau) \vert Pe = 2Pe \text{ for all }t\in R\]
	and
	\[ \lim_{t\to\infty} \vert T(t) - T(t+\tau) \vert (I-P)e =0.\]
\end{theorem}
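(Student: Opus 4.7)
The plan is to establish the four assertions in sequence: $E_2$ is a band; both $E_2$ and $E_0$ are $\mathscr{T}$-invariant; the identity $\lvert T(t)-T(t+\tau)\rvert Pe = 2Pe$ holds; and finally $\lvert T(t)-T(t+\tau)\rvert(I-P)e\to 0$, which will be the main obstacle. First, write
\[
E_2=\bigcap_{t\in R} N(T(t)\wedge T(t+\tau)),\qquad N(S):=\{y\in E:S\lvert y\rvert=0\};
\]
each $N(S)$ is the absolute null ideal of a positive (and, by order continuity of the norm, automatically order continuous) operator, hence a band. Intersections of bands are bands, so $E_2$ is a band and $E_0=E_2^{\perp}$ is its band complement. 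For $\mathscr{T}$-invariance of $E_2$ I use the inequality $(T(t)\wedge T(t+\tau))T(s)\le T(t+s)\wedge T(t+s+\tau)$ (immediate from positivity of $T(s)$) together with $\lvert T(s)y\rvert\le T(s)\lvert y\rvert$. For $E_0$: decompose $e=e_2+e_0$ along $E=E_2\oplus E_0$; then $T(s)e_2\in E_2$ and $T(s)e_2\le e$, so $T(s)e_2\le e_2$, and strict positivity of $\phi$ with $\applied{\phi}{T(s)e_2}=\applied{\phi}{e_2}$ forces $T(s)e_2=e_2$, whence $T(s)e_0=e_0\in E_0$. Density of $E_e\cap E_0$ in $E_0$ (as the image of $E_e$ under the band projection) together with the pointwise bound $T(s)x\le ce_0$ for $0\le x\le ce_0$ extends invariance to all of $E_0$.

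For the ``$=2Pe$'' identity, the lattice formula $\lvert S-T\rvert=S+T-2(S\wedge T)$ for positive operators, evaluated at $e_2=Pe$, combined with $T(t)e_2=T(t+\tau)e_2=e_2$ and $(T(t)\wedge T(t+\tau))e_2=0$, yields $\lvert T(t)-T(t+\tau)\rvert Pe = 2Pe$.

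For the convergence on $E_0$, set $f_t:=(T(t)\wedge T(t+\tau))e_0\le e_0$; the same identity gives $\lvert T(t)-T(t+\tau)\rvert e_0=2(e_0-f_t)$, so the goal becomes $e_0-f_t\to 0$ in norm. From $T(s)(T(t)\wedge T(t+\tau))\le T(s+t)\wedge T(s+t+\tau)$ applied at $e_0$ one gets $T(s)f_t\le f_{s+t}$; evaluating $\phi$ then shows $t\mapsto\applied{\phi}{f_t}$ is non-decreasing and bounded by $\applied{\phi}{e_0}$. By Dedekind completeness of $E$, $v:=\sup_t f_t$ exists and lies in $[0,e_0]$; order continuity of every positive $T(s)$ yields $T(s)v\le v$, and equality of $\phi$-integrals with strict positivity of $\phi$ promote this to $T(s)v=v$, so $v\in\fix(\mathscr{T})\cap E_0\cap[0,e_0]$.

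The main obstacle is two-fold: showing $v=e_0$, and then upgrading $\phi$-convergence to norm convergence. For $v=e_0$, set $w:=e_0-v\in\fix(\mathscr{T})\cap E_0\cap E_+$ and aim to prove $w\in E_2$, which together with $w\in E_0=E_2^{\perp}$ forces $w=0$. Additivity of the positive operator $T(t)\wedge T(t+\tau)$ on $e_0=v+w$ gives $(T(t)\wedge T(t+\tau))w=f_t-(T(t)\wedge T(t+\tau))v$. Iterating the order-preserving map $\Psi(y):=\sup_t(T(t)\wedge T(t+\tau))y$ on $\fix(\mathscr{T})\cap[0,e_0]\cap E_+$ produces a decreasing sequence of fixed points whose order infimum is a fixed point $\bar v$ of $\Psi$; this forces $\applied{\phi}{(T(t)\wedge T(t+\tau))\bar v}\to\applied{\phi}{\bar v}$, which combined with $\applied{\phi}{f_t}\to\applied{\phi}{v}$ and the additivity above yields $\applied{\phi}{(T(t)\wedge T(t+\tau))w}\to 0$. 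Since $t\mapsto\applied{\phi}{(T(t)\wedge T(t+\tau))w}$ is itself non-decreasing (same argument), a non-negative non-decreasing function with limit $0$ is identically zero, and strict positivity of $\phi$ gives $(T(t)\wedge T(t+\tau))w=0$ for every $t$, i.e.\ $w\in E_2$. Finally, once $v=e_0$, the decreasing net $r_t:=\sup_{s\ge t}(e_0-f_s)\le e_0$ (Dedekind completeness) converges in order to some $r_\infty\ge 0$ with $\applied{\phi}{r_\infty}=0$, hence $r_\infty=0$ by strict positivity of $\phi$; order continuity of the norm gives $\lVert r_t\rVert\to 0$, and $0\le e_0-f_t\le r_t$ yields $\lVert e_0-f_t\rVert\to 0$, completing the proof.
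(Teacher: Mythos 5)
Your treatment of the first three assertions is sound and essentially matches the paper: $E_2$ is an intersection of absolute null ideals of positive (hence, by order continuity of the norm, order continuous) operators and therefore a band; invariance of $E_2$ follows from $S(t)T(s)\le S(t+s)$ with $S(t)\coloneqq T(t)\wedge T(t+\tau)$; invariance of $E_0$ follows once $Pe\in\fix(\mathscr{T})$ is extracted via the strictly positive functional; and the identity $\lvert T(t)-T(t+\tau)\rvert Pe=2Pe$ is the lattice identity $\lvert A-B\rvert=A+B-2(A\wedge B)$ evaluated at $Pe$. The reduction of the last assertion to showing $v\coloneqq\lim_{t\to\infty}S(t)e_0=e_0$ is also correct.

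The gap is in the argument that $v=e_0$, and it is not a repairable slip but the absence of the one genuinely hard idea in the theorem. Write $f_t\coloneqq S(t)e_0$, $w\coloneqq e_0-v$ and $\Psi(y)\coloneqq\sup_tS(t)y=\lim_tS(t)y$ on positive fixed points. Additivity gives $\applied{\phi}{S(t)w}=\applied{\phi}{f_t}-\applied{\phi}{S(t)v}\to\applied{\phi}{v}-\applied{\phi}{\Psi(v)}=\applied{\phi}{\Psi(e_0)-\Psi^{2}(e_0)}$. Your iteration does produce a fixed point $\bar v=\inf_n\Psi^n(e_0)$ of $\Psi$, for which indeed $\applied{\phi}{S(t)\bar v}\to\applied{\phi}{\bar v}$; but $\bar v$ is in general a different element from $v=\Psi(e_0)$, and it is $v$, not $\bar v$, that enters the relation $S(t)w=f_t-S(t)v$. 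So what you actually obtain is $\lim_t\applied{\phi}{S(t)w}=\applied{\phi}{\Psi(e_0)-\Psi^{2}(e_0)}$, which vanishes only if $\Psi(e_0)=\Psi^{2}(e_0)$ --- precisely what needs to be proved. Nothing in this purely order-theoretic setup excludes the scenario $\Psi^{n+1}(e_0)\lneq\Psi^{n}(e_0)$ for all $n$, with $\inf_n\Psi^n(e_0)$ strictly between $0$ and $e_0$. This is why the paper's proof brings in a quantitative ingredient: the binomial estimate $2^{-m}\bigl(\sum_{j=1}^m\bigl\vert\binom{m}{j}-\binom{m}{j-1}\bigr\vert+2\bigr)\le 2/\sqrt m$ of Lemma~\ref{lem:zerotwoestimation}, fed through the auxiliary positive operator $U=T(t_1)-2^{-m}S(t_0)^m(I+T(\tau))^m$, yields $D(nt_1)e\le 2U^ne+\tfrac{2}{\sqrt m}e$ and hence $h\coloneqq\lim_tD(t)e\le 2y+\tfrac{2}{\sqrt m}e$ for a fixed point $y$ annihilated by $S(t_0)^m$; combined with Lemma~\ref{lem:zerotwoSD}~(\ref{zerotwoSDd}) this contradicts $(h-\tfrac{2}{\sqrt m}e)^+>0$. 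Your proposal contains no substitute for this quantitative step, so the proof of $\lim_{t\to\infty}\lvert T(t)-T(t+\tau)\rvert(I-P)e=0$ is incomplete.
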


To simplify notation, for $t \in R$ we define the positive operators 
	\begin{align*} S(t)\coloneqq T(t)\wedge T(t+\tau) \quad \text{and} \quad
D(t) \coloneqq \vert T(t) - T(t+\tau) \vert\end{align*} on $E$.
It follows immediately that
\begin{align}
S(t)x+\frac{1}{2}D(t)x = x \label{eqn:zerotwoSDa}
\end{align}
for all $x\in\fix(\mathscr{T})$ and $t\in R$. Further properties of $S(t)$ and $D(t)$
are provided by the following lemma.

\begin{lemma}
	\label{lem:zerotwoSD}
	Let $x' \in \fix(\mathscr{T}')$ be strictly positive and $\tau \in R$.
	Then the following assertions hold.
	\begin{enumerate}[(a)]
		\item \label{zerotwoSDb} $D(t)T(s) \geq D(t+s)$ and $T(s)D(t)\geq D(t+s)$ for all $t,s\in R$. 
			Moreover, if $x\in \fix(\mathscr{T})$, then $\lim_{t\to\infty} D(t)x \in \fix(\mathscr{T})$.
		\item\label{zerotwoSDc} $S(t)T(s) \leq S(t+s)$ and $T(s)S(t)\leq S(t+s)$ for all $t,s\in R$.
			Moreover, if $x\in \fix(\mathscr{T})$, then $\lim_{t\to\infty} S(t)x \in \fix(\mathscr{T})$.
		\item \label{zerotwoSDd} If $\lim_{t\to\infty} S(t)x > 0$ for all $0<x\in\fix(\mathscr{T})$, then 
		$\lim_{t\to\infty} S(t)^m x > 0$ for all $m\in\N$ and $x\in \fix(\mathscr{T})$, $x>0$.
	\end{enumerate}
\end{lemma}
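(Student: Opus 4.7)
My plan is to handle (a) and (b) by direct calculation from the factorizations
$T(t+s)-T(t+s+\tau) = (T(t)-T(t+\tau))T(s) = T(s)(T(t)-T(t+\tau))$,
and then to use parts (a) and (b) to power an induction for (c). The recurring tools will be the inequality $\lvert Au\rvert\leq \lvert A\rvert u$ for $u\geq 0$ and regular $A$, order continuity of positive operators on $E$ (which follows from the norm being order continuous), and the trick of converting a one-sided inequality $T(s)y\geq y$ (or $\leq y$) into equality via strict positivity of $x'$, exactly as used in Lemma~\ref{lem:fixedpointadjoint}.

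For (a), the operator inequalities $D(t+s)\leq D(t)T(s)$ and $D(t+s)\leq T(s)D(t)$ follow immediately from the factorization by evaluating on $x\geq 0$ and using positivity of $T(s)$. For the limit assertion, restricting to $x\in\fix(\mathscr{T})$ collapses $T(s)x=x$ in the first inequality and makes $t\mapsto D(t)x$ decreasing in $E_+$; order continuity of the norm then produces the norm limit $y=\inf_t D(t)x$. To place $y$ in $\fix(\mathscr{T})$, I would apply $T(s)$, exploit that $T(s)$ commutes with the infimum of an order bounded decreasing net, and use the second inequality to get $T(s)y = \inf_t T(s)D(t)x \geq \inf_t D(t+s)x = y$; testing against the strictly positive $x'$ promotes this to $T(s)y=y$. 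Part (b) is parallel: $S(t)\leq T(t),T(t+\tau)$ yields $S(t)T(s),\, T(s)S(t)\leq S(t+s)$; restriction to $\fix(\mathscr{T})$ makes $t\mapsto S(t)x$ increasing and bounded above by $x$, and the symmetric argument places its norm limit $y_1$ in $\fix(\mathscr{T})$.

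For (c), I would induct on $m$ with the strengthened claim that $y_m\coloneqq \lim_{t\to\infty} S(t)^m x$ exists in norm, lies in $\fix(\mathscr{T})$, and is strictly positive for every $0<x\in\fix(\mathscr{T})$. The base case $m=1$ is the hypothesis combined with (b). For the step, boundedness of $\mathscr{T}$ gives $\lVert S(t)\rVert\leq \lVert T(t)\rVert\leq M$ uniformly (since $0\leq S(t)\leq T(t)$), so
\[ \lVert S(t)^{m+1}x - S(t)y_m\rVert \leq M\,\lVert S(t)^m x - y_m\rVert \longrightarrow 0 \]
by the inductive hypothesis. Applying (b) to $y_m\in\fix(\mathscr{T})$ gives $S(t)y_m\to y_{m+1}\in\fix(\mathscr{T})$ in norm, and the hypothesis applied to $y_m>0$ yields $y_{m+1}>0$; the triangle inequality then closes the induction.

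The delicate point I expect is the appeal to order continuity of positive operators in (a) and (b): one must know that $T(s)$ commutes with the formation of the monotone norm limit $\inf_t D(t)x$ (resp.\ $\sup_t S(t)x$), which rests on the order continuity of the norm via \cite[Thm.\ 2.4.2]{meyer1991}. Once that is in place, everything reduces to the bookkeeping sketched above.
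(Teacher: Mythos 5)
Your proposal is correct and follows essentially the same route as the paper: modulus submultiplicativity for (a), monotone convergence via order continuity of the norm plus the strict positivity of $x'$ to upgrade $T(s)y\geq y$ (resp.\ $\leq y$) to equality, and a uniform-bound induction for (c) that is just the paper's telescoping identity $S(t)^m x - x_m = \sum_{j=1}^m S(t)^{m-j}(S(t)x_{j-1}-x_j)$ recast as an induction step. Two cosmetic differences: your derivation of (b) directly from $S(t)\leq T(t),\,T(t+\tau)$ is slightly more economical than the paper's use of $a\wedge b=\tfrac12(a+b-\lvert a-b\rvert)$ together with (a), and the "delicate point" you flag is even less delicate than you fear, since the monotone limits already converge in norm, so plain norm continuity of $T(s)$ suffices.
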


\begin{proof}
		(\ref{zerotwoSDb}) For all $t, s\in R$ one observes that
		\[ D(t)T(s) = \vert T(t)-T(t+\tau)\vert\cdot \vert T(s)\vert \geq \vert (T(t)-T(t+\tau))T(s)\vert = D(t+s) \]
		and similarly that $T(s)D(t)\geq D(t+s)$.
		Let $x\in \fix(\mathscr{T})$, $x\geq 0$.  Then 
		\[D(t)x = D(t)T(s)x \geq D(t+s)x\]
		for all $t,s\in R$. Hence, by the order continuity of the norm,
		$y\coloneqq \lim D(t)x$ exists in $E$ and
		 \[ T(s)y = \lim_{t\to\infty} T(s)D(t)x \geq \lim_{t\to\infty} D(t+s)x = y \geq 0.\]
		 Finally, we conclude from 
		 $\applied{x'}{T(s)y - y} = 0$ that $y\in \fix(\mathscr{T})$ because $x'$ is strictly positive.
 	
		(\ref{zerotwoSDc}) For all $t,s\in R$ one observes that
		\begin{align*}
		  S(t)T(s) &= \frac{1}{2} (T(t+s)+T(t+\tau+s)-D(t)T(s) )\\ &\leq \frac{1}{2} (T(t+s)+T(t+\tau+s)-D(t+s)) = S(t+s)
		\end{align*}
		and similarly that $T(s)S(t) \leq S(t+s)$.
		Hence, $S(t)x$ is increasing for all positive $x\in\fix(\mathscr{T})$.
		Since $0 \leq S(t) \leq \frac{1}{2}(T(t)+T(t+\tau))$, we conclude as in the proof of part (\ref{zerotwoSDb}) that
		$\lim S(t)x$ exists in $\fix(\mathscr{T})$ for all $x\in \fix(\mathscr{T})$.

		(\ref{zerotwoSDd}) Let $x\in\fix(\mathscr{T})$, $x>0$, and define recursively 
		$x_k \coloneqq \lim_{t\to\infty} S(t)x_{k-1}$ for all $k\in\N$
		where $x_0 \coloneqq x$.
		Then $x_k\in\fix(\mathscr{T})$ by part (\ref{zerotwoSDc}) and $x_k>0$ by assumption.
			It follows by induction that
		\[ S(t)^m x-x_m = \sum_{j=1}^m S(t)^{m-j}( S(t)x_{j-1} -x_j )\]
		for all $m\in\N$ and $t\in R$.
		As $\Vert S(t) \Vert \leq \sup_{t\in R}\Vert T(t)\Vert $, we conclude that
		\[ \lim_{t\to\infty} S(t)^m x = x_m>0\]
		for all $m\in\N$.
\end{proof}

The key for the proof of the zero-two law is the following combinatorial lemma.

\begin{lemma}
	\label{lem:zerotwoestimation}
	For every $m\in \N$
	\[ 2^{-m}\biggr( \sum_{j=1}^{m} \biggr\vert \binom{m}{j}-\binom{m}{j-1} \biggr\vert +2\biggr) \leq \frac{2}{\sqrt{m}}. \]
\end{lemma}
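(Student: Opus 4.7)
The plan is to exploit the unimodality of the binomial coefficients to eliminate the absolute values, reducing the claim to the well-known central binomial coefficient estimate $\binom{m}{\lfloor m/2\rfloor}\leq 2^m/\sqrt{m}$.

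Since $\binom{m}{j}/\binom{m}{j-1}=(m-j+1)/j\geq 1$ precisely when $j\leq (m+1)/2$, the sequence $\bigl(\binom{m}{j}\bigr)_{j=0}^m$ is nondecreasing up to $j_*=\lceil m/2\rceil$ and nonincreasing afterwards, attaining its maximum $\binom{m}{\lfloor m/2\rfloor}$ at $j_*$ in both parities. Splitting the sum at $j_*$ and telescoping on each side,
\[
\sum_{j=1}^{m}\biggl|\binom{m}{j}-\binom{m}{j-1}\biggr|
=\Bigl(\tbinom{m}{j_*}-\tbinom{m}{0}\Bigr)+\Bigl(\tbinom{m}{j_*}-\tbinom{m}{m}\Bigr)
=2\binom{m}{\lfloor m/2\rfloor}-2.
\]
Adding $2$ and multiplying by $2^{-m}$, the desired inequality reduces to
\[
\binom{m}{\lfloor m/2\rfloor}\leq \frac{2^m}{\sqrt{m}}.
\]

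To dispatch this, I would invoke the elementary estimate $\binom{2n}{n}\leq 4^n/\sqrt{2n+1}$, which admits a very short inductive proof: one verifies that the sequence $a_n\coloneqq(2n+1)\binom{2n}{n}^{2}/16^{n}$ satisfies $a_{n+1}/a_n=(2n+3)(2n+1)/(2n+2)^2<1$ and starts at $a_0=1$, hence $a_n\leq 1$. For even $m=2n$ the reduction now follows at once, since $\sqrt{2n+1}>\sqrt{2n}$. For odd $m=2n+1$, the identity $\binom{2n+1}{n}=\tfrac12\binom{2n+2}{n+1}$ brings us back to the even case and yields $\binom{2n+1}{n}\leq 2\cdot 4^n/\sqrt{2n+3}\leq 2^m/\sqrt{m}$.

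The argument is entirely elementary and I do not anticipate any substantive obstacle; the only subtlety is a uniform treatment of the two parities of $m$, which is handled by the single choice of cut-point $j_*=\lceil m/2\rceil$ ensuring $\binom{m}{j_*}=\binom{m}{\lfloor m/2\rfloor}$ in both cases.
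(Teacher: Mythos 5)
Your proof is correct and follows essentially the same route as the paper: both telescope the absolute values using the unimodality of the binomial coefficients to reduce the claim to the central binomial estimate $\binom{2n}{n}\leq 2^{2n}/\sqrt{2n}$. The only differences are cosmetic: the paper invokes Stirling's formula for that estimate where you give a short self-contained induction for the slightly sharper bound $\binom{2n}{n}\leq 4^n/\sqrt{2n+1}$, and your uniform cut point $j_*=\lceil m/2\rceil$ handles both parities at once, whereas the paper computes the odd case $m=2k-1$ and then relates the even case $m+1$ back to it.
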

\begin{proof}
	For $k\in\N$ and $m=2k-1$ we have
	\begin{align*}
		\sum_{j=1}^m \biggr\vert \binom{m}{j}-\binom{m}{j-1} \biggr\vert +2
		&= 2\sum_{j=1}^{k-1}\biggr(\binom{m}{j}-\binom{m}{j-1}\biggr)+2\\
		&= 2\binom{m}{k-1} = \binom{2k}{k} \\
		&= \sum_{j=1}^k \biggr(\binom{m+1}{j}-\binom{m+1}{j-1}\biggr)+1\\
		&= \frac{1}{2}\sum_{j=1}^{m+1}\biggr\vert \binom{m+1}{j}-\binom{m+1}{j-1} \biggr\vert+1.
	\end{align*}
	It follows from Stirling's formula that the central binomial coefficient can be estimated by
	\[ \binom{2k}{k} \leq \frac{2^{2k}}{\sqrt{2k}} \quad (k\in\N).\]
	Thus, we obtain that
	\begin{align*}
		2^{-m}\biggr( \sum_{j=1}^m \biggr\vert \binom{m}{j}-\binom{m}{j-1}\biggr\vert +2\biggr)
		&= 2^{-(m+1)} \biggr(\sum_{j=1}^{m+1}\biggr\vert \binom{m+1}{j}-\binom{m+1}{j-1}\biggr\vert +2\biggr)\\
		&= 2^{-m} \binom{2k}{k} 
		\leq \frac{2}{\sqrt{m+1}} \leq \frac{2}{\sqrt{m}},
	\end{align*}
	which completes the proof.
\end{proof}

\begin{proof}[Proof of Theorem \ref{thm:zerotwo}]
	By Lemma \ref{lem:zerotwoSD} (\ref{zerotwoSDc}), we have 
	\[ S(t)\vert T(s)y \vert \leq S(t)T(s)\vert y\vert \leq S(t+s)\vert y \vert = 0\]
	for all $y\in E_2$ and $t, s\in R$, which shows that $E_2$ is $\mathscr{T}$-invariant.
	Let $x'\in \fix(\mathscr{T}')$ be strictly positive. Since
	\[ 0 \leq T(t)Pe = PT(t)Pe \leq PT(t)e = Pe \]
	and $\applied{x'}{Pe - T(t)Pe}=0$ for all $t\in R$, it follows that $Pe \in \fix(\mathscr{T})$. 
	Define $e_0 \coloneqq (I-P)e \in E_0$ and $e_2 \coloneqq Pe\in E_2$.
	As $e_0 = e-e_2 \in \fix(\mathscr{T})$, we conclude that $E_0$, which equals 
	the closure of the principle ideal generated by $e_0$, is $\mathscr{T}$-invariant.

	It follows immediately from \eqref{eqn:zerotwoSDa} that $D(t)e_2 =2 e_2$ for all $t\in R$. 
	Hence, it remains to show that $\lim D(t) e_0 = 0$. 
	For simplicity, we omit the index $0$ and write $E=E_0$, $e=e_0$, $T(t)=T(t)|_{E_0}$ and so on.
	Now, $\lim S(t)y > 0$ for all $y\in \fix(\mathscr{T})$, $y>0$, by Lemma~\ref{lem:zerotwoSD} (\ref{zerotwoSDc})
	and the definition of $E_0$.

	Assume that $h\coloneqq \lim D(t)e >0$. As $h\leq 2e$, there exists
	$m\in\N$ such that \[ k \coloneqq \biggr( h -\frac{2}{\sqrt{m}} e\biggr)^+>0.\]
	Moreover, $k\in \fix(\mathscr{T})$ since the fixed space is a sublattice. Indeed,
	if $y\in\fix(\mathscr{T})$, it follows from $T(t)\vert y\vert \geq \vert T(t)y\vert = \vert y\vert$
	and $\applied{x'}{T(t)\vert y\vert - \vert y\vert} = 0$ for all $t\in R$ that
	$\vert y\vert \in\fix(\mathscr{T})$ because $x'$ is strictly positive.
	Now, Lemma \ref{lem:zerotwoSD} (\ref{zerotwoSDd}) yields that 
	$S(t_0)^m k > 0$ for some $t_0\in R$. Let $t_1 \coloneqq m(t_0 + \tau)$ and define the operator
	\begin{align}
	 U \coloneqq T(t_1) - 2^{-m} S(t_0)^m \biggr(I+T(\tau)\biggr)^m. \label{eqn:Tt1}
	\end{align}
	It follows from $S(t_0)(I+T(\tau)) \leq T(t_0+\tau)+T(t_0)T(\tau) = 2T(t_0+\tau)$ that
	$U$ is positive.
	Moreover,
	\begin{align}
	 T(n t_1) = U^n + R_n 2^{-m} \biggr( I + T(\tau)\biggr)^m \label{eqn:Tnt1}
	 \end{align}
	for all $n\in\N$ where $R_1 \coloneqq S(t_0)^m$ and $R_{n+1} \coloneqq U^nR_1 + R_nT(t_1)$.
	We infer from $e=U^ne+R_ne$ that $0\leq R_n e \leq e$ and $0\leq U^n e \leq e$ for all $n\in\N$.
	Now, by Lemma \ref{lem:zerotwoSD} (\ref{zerotwoSDb}) and Lemma \ref{lem:zerotwoestimation}, we obtain that
	\begin{align*}
		h \leq D(nt_1)e &= \vert T(nt_1)(I-T(\tau))\vert e \\
		&\leq 2U^n e + R_n2^{-m}
		\biggr\vert \sum_{j=0}^m \binom{m}{j} T^j(\tau)(I-T(\tau))\biggr\vert e \\
		&=2U^n e + R_n 2^{-m} \biggr\vert  \sum_{j=0}^m \binom{m}{j}T^j(\tau) 
		- \sum_{j=1}^{m+1}\binom{m}{j-1}T^j(\tau)\biggr\vert e \\
		&=2U^n e + R_n2^{-m} \biggr( 2e + \sum_{j=1}^{m} \biggr\vert \binom{m}{j}-\binom{m}{j-1}\biggr\vert e\biggr)\\
		&\leq 2 U^ne+R_n\frac{2}{\sqrt{m}}e  \leq 2 U^ne+\frac{2}{\sqrt{m}}e
	\end{align*}
	for every $n\in\N$. Let $y\coloneqq \lim_{n\to\infty} U^n e \geq 0$. Then
	$h\leq 2 y + \frac{2}{\sqrt{m}} e$ and hence
	\[0 < k = \biggr( h -\frac{2}{\sqrt{m}} e\biggr)^+ \leq 2y. \]
	Since $y$ is a fixed point of $U$, equation \eqref{eqn:Tnt1} yields that $T(n t_1)y \geq y \geq 0$
	and we conclude from $\applied{x'}{T(nt_1)y - y}=0$ that $T(n t_1)y=y$ for every $n\in\N$.
	By equation \eqref{eqn:Tt1} we have \[ 0 = S(t_0)^m \biggr( I+T(\tau) \biggr)^m y \geq S(t_0)^my \geq 0. \]
	Therefore, $0 < S(t_0)^m k \leq 2S(t_0)^m y = 0$ which contradicts the preceded
	observation that $S(t_0)^mk>0$. 
	Hence, $h=\lim D(t)e = 0$.
\end{proof}

\section{Axmann's theorem}
\label{sec:axmann}

We give a proof Axmann's theorem from \cite[Satz 3.5]{axmann1980}
stating that not all powers of an irreducible Harris operator can be disjoint.
A proof of this for $E=L^p$ can be found in \cite[Sec.\ 6]{arendt2008}.

We start with a version for $L$-spaces and reduce the general case to it in what follows.
Let us recall that a Banach lattice $E$ is said to be a \emph{$L$-space} if 
\[ \lVert x + y \lVert = \lVert x\rVert + \lVert y\rVert\]  holds for all $x,y \in E_+$.

\begin{prop}
\label{prop:axmann}
Let $T$ be a positive and irreducible operator on a $L$-space $E$
such that $T\not\in (E'\otimes E)^\bot$.
Then there is $n\in \N$, $n\geq 2$, such that $T\wedge T^n >0$.
\end{prop}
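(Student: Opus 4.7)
My plan is to argue by contradiction, assuming $T\wedge T^n=0$ for every $n\geq 2$ and combining the Harris-type hypothesis $T\notin(E'\otimes E)^{\bot}$ with the irreducibility of $T$ and the $L$-space structure to reach a contradiction.

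The first step is to extract a non-trivial positive kernel operator dominated by $T$. Since $T$ lies outside $(E'\otimes E)^{\bot}$, the standard Riesz-Kantorovich computation in the lattice of regular operators yields $\varphi\in E'_+$ and $f\in E_+$ such that
\[
K\coloneqq T\wedge(\varphi\otimes f)>0,
\]
so $K$ is a positive kernel operator with $K\leq T$ and $Kx\leq\varphi(x)f$ for all $x\in E_+$. Because $K\leq T$ we have $K^n\leq T^n$ for every $n$, and by Lemma~\ref{lem:kernelprop} each product $T^{a}KT^{b}$ is a positive kernel operator dominated by $T^{a+b+1}$. Under the standing assumption, all these kernel operators are disjoint from $T$ as soon as $a+b+1\geq 2$; in particular $K\wedge T^{n-1}K=0$ for every $n\geq 2$.

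The second step is to move to a concrete $L^1$-picture via Kakutani's representation theorem, identifying $E$ isometrically with some $L^1(\Omega,\mu)$; under this identification positive kernel operators correspond to non-negative measurable integral kernels and operator disjointness translates to essentially disjoint kernel supports in $\Omega\times\Omega$. Writing $k$ for the kernel of $K$ and $\tau_n$ for that of $T^{n-1}K$, the conclusion of the first step becomes the assertion that $k$ and $\tau_n$ have $\mu\otimes\mu$-essentially disjoint supports for every $n\geq 2$.

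The third step invokes irreducibility to rule this out. Irreducibility of $T$ says that the closed $T$-invariant ideal generated by any $0<h\in L^1(\Omega,\mu)$ is all of $L^1$; applying this to $h=Kf$ shows that the iterated supports $\operatorname{supp}(T^{n-1}Kf)$ cover $\Omega$ modulo $\mu$-null sets as $n$ varies. Tracking the composition of $T$ with the kernels $\tau_n$ and exploiting the rank-one domination $K\leq\varphi\otimes f$, which forces each $\tau_n$ to concentrate in a controlled region driven by $T^{n-1}f$ and $\varphi$, I expect to obtain some $n\geq 2$ for which $\operatorname{supp}(\tau_n)$ overlaps $\operatorname{supp}(k)$ in positive $\mu\otimes\mu$-measure, contradicting the disjointness from the second step. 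The main obstacle I anticipate lies precisely here: irreducibility gives qualitative reachability at the vector level, whereas the required contradiction is a quantitative statement about overlap of kernel supports in $\Omega\times\Omega$. Bridging these probably requires a careful period analysis in the spirit of irreducible Markov chains, controlling the possibly non-kernel singular part $T-K$ whose contribution does not appear in the measurable kernels $\tau_n$ directly.
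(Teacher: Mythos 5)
Your reductions are fine as far as they go: from $T\notin(E'\otimes E)^{\bot}$ you correctly extract a rank-one operator $\varphi\otimes f$ with $K\coloneqq T\wedge(\varphi\otimes f)>0$, the inequality $K\wedge T^{n-1}K\leq T\wedge T^{n}=0$ is valid, and the Kakutani representation $E\cong L^{1}(\Omega,\mu)$ with kernel operators acting by measurable kernels is legitimate. But the proof stops exactly where the real difficulty begins, and you say so yourself: no contradiction is actually derived from the disjointness of the supports of $k$ and $\tau_{n}$. Irreducibility only tells you that the closed ideal generated by $\{T^{m}Kf : m\geq 0\}$ is all of $E$ --- a reachability statement about vectors in $\Omega$ --- and this gives no control over where the kernels $\tau_{n}$ live in $\Omega\times\Omega$, in particular because the part $T-K$, which need not be a kernel operator at all, mediates the passage from $Kf$ to $T^{n-1}Kf$. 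That this gap is not merely technical is illustrated by Arendt's example quoted in the remark following Theorem \ref{thm:discreteconvergence}: there is a positive, compact, \emph{irreducible} Markov operator with $T^{n}\wedge T^{m}=0$ for all $n\neq m$. Such an operator is of course not a Harris operator, but it shows that irreducibility plus qualitative reachability can never by themselves force the overlap you are hoping for; the kernel hypothesis must enter in a quantitative way that your sketch does not supply, and the ``period analysis'' you defer to is precisely the missing proof.

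For comparison, the paper's argument takes a genuinely different and complete route. It represents the \emph{dual} $E'$ as $C(K)$, uses Synnatschke's theorem to get $(T'\wedge T'^{n})\mathds{1}=\inf\{T'h+T'^{n}(\mathds{1}-h): 0\leq h\leq\mathds{1}\}=0$, and then a Baire category argument to produce a single point $t$ in a dense $G_{\delta}$ subset of $K$ with $T''\delta_{t}\wedge T''^{n}\delta_{t}=0$ \emph{simultaneously for all} $n\geq 2$. Intersecting $T''\delta_{t}$ with the rank-one operator not disjoint from $T$ then yields an honest vector $v>0$ in $E$ (here one uses that $E$ is an ideal in $E''$) satisfying $v\wedge T^{n}v=0$ for every $n\geq 1$, and the closure of the ideal generated by the orbit of $w=Tv$ is a non-trivial closed $T$-invariant ideal, contradicting irreducibility directly. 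That construction of $v$ --- simultaneous disjointness at a single point obtained by Baire category in the bidual --- is the decisive idea absent from your proposal; without it, or some equally substantial substitute, the argument does not close.
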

\begin{proof}
	Aiming for a contradiction, we assume that $T\wedge T^n =0$ for all $n \geq 2$.
	Since $E$ is a $L$-space, we may identify $E'$ with $C(K)$ for some compact space $K$ 
	by Kakutani's theorem \cite[Thm.\ 2.1.3]{meyer1991}.
	For $n,m \in\N$, $n\geq 2$, define
	\[ A_n \coloneqq \{ T'h + T'^n(\mathds{1}-h) : h\in C(K),\, 0\leq h\leq \mathds{1}\} \subseteq C(K)\]
	and
	\[ O_{n,m} \coloneqq \{t \in K : h(t) < 1/m \text{ for some } h\in A_n\}.\]
	It follows from our assumption and Synnatschke's theorem \cite[Prop.\ 1.4.17]{meyer1991} that
	\[ \inf A_n = (T' \wedge T'^n)\mathds{1} = (T\wedge T^n)' \mathds{1} = 0\] for all $n\geq 2$.
	Now, we show that each of the open sets $O_{n,m}$ is dense in $K$.
	Assume the opposite. Then there exists a non-empty open
	set $U\subseteq K\backslash O_{n,m}$ for some $n,m \in \N$.
	By Urysohn's theorem, we can construct a continuous function $g:K\to [0,\frac{1}{m}]$ vanishing on $O_{n,m}$
	such that $g(t_0)=\frac{1}{m}$ for some $t_0 \in U$.
	Hence, $g>0$ is a lower bound of $A_n$, which is impossible.
	Therefore, every $O_{n,m}$ and, by Baire's theorem, also $G\coloneqq \cap_{n,m} O_{n,m}$ is dense in $K$.
	Note that for all $t\in G$ and $n\geq 2$ one has that 
	\[ \applied{T''\delta_t \wedge T''^n \delta_t}{\mathds{1}} = \inf\{ h(t) : h\in A_n \} = 0\]
	and consequently $T'' \delta_t \wedge T''^n \delta_t = 0$.

	By assumption, there are $x\in E_+$ and $y'\in E'_+$ such that $T$ is not disjoint from $R = y' \otimes x$.
	Then $R'= x\otimes y'$ corresponds to a rank-one operator 
	$\mu\otimes g$ on $C(K)$ for some $\mu \in C(K)'_+$ and $g\in C(K)_+$.
	Since $R'\wedge T' \geq (R\wedge T)' >0$ there exists some $e\in C(K)_+$, such that
	$\varrho \coloneqq (R'\wedge T')e>0$. For all $0\leq h\leq e$ and $t\in K$ it follows from
	\[ \varrho = (R'\wedge T')h + (R'\wedge T')(e-h) \leq R'h+T'(e-h) \]
	that
	\[ \varrho(t) \leq \applied{R'h}{\delta_t}+\applied{e-h}{T''\delta_t} = g(t)\applied{\mu}{h}+\applied{e-h}{T''\delta_t}.\]
	Taking the infimum over all $0\leq h\leq e$ yields that $\varrho(t) \leq (g(t)\mu \wedge T''\delta_t)e$
	for all $t\in K$.
	Now, fix $t\in \{ s\in K : \varrho(s)>0\} \cap G$ which exists since $G$ is dense in $K$.
	Then $\nu \coloneqq g(t)\mu \wedge T''\delta_t >0$ because $\varrho(t)>0$.
	As $\nu$ is dominated by $g(t)\mu$ and $\mu$ corresponds to $x \in E$, $\nu$ itself corresponds 
	to a vector $v\in E$, $v>0$, since $E$ is an ideal in $E''$. This vector $v$ satisfies
	\[ v \wedge T^n v \leq T''\delta_t \wedge T''^{n+1} \delta_t = 0\]
	for all $n\in \N$ because $t\in  G$.
	
	Finally, we consider $w\coloneqq Tv$. 
	If $w=0$, then the closed ideal $\overline{E_v}$ is $T$-invariant and non-trivial since $T\neq 0$.
	If $w>0$, then the closure of the $T$-invariant ideal
	\[ J\coloneqq  \{ z\in E : \lvert z\rvert \leq c(w+Tw+\dots+T^k w) \text{ for some } c>0 \text{ and } m\in\N\}\]
	is non-trivial because $w\in \overline{J}$ and $v \in J^\bot$.
	In both cases, $T$ cannot be irreducible.
	Thus, we conclude that $T\wedge T^n > 0$ for some $n\geq 2$.
\end{proof}

\begin{theorem}
\label{thm:axmann}
Let $T$ be a positive and irreducible operator on $E$, a Banach lattice with order continuous norm,
such that $T\not\in (E'\otimes E)^\bot$.
Then there is $n\in \N$, $n\geq 2$, such that $T\wedge T^n >0$.
\end{theorem}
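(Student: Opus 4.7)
The plan is to reduce Theorem~\ref{thm:axmann} to its $L$-space counterpart, Proposition~\ref{prop:axmann}, along the same lines as the proof of Proposition~\ref{prop:trivialspectrumabel}. The first step is to produce a strictly positive functional $\phi\in E'_+$ relative to which $T$ is bounded for the $L$-norm $\|x\|_\phi\coloneqq\langle\phi,|x|\rangle$. Completing $(E,\|\cdot\|_\phi)$ then yields an $L$-space $F$, and by the order continuity of $\phi$ together with \cite[Lem.\ IV 9.3]{schaefer1974}, $E$ embeds in $F$ as an ideal; accordingly $T$ extends uniquely to a positive bounded operator $\tilde T$ on $F$.

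Next I would verify that $\tilde T$ inherits the hypotheses of Proposition~\ref{prop:axmann}. Irreducibility follows directly from the ideal embedding: any closed $\tilde T$-invariant ideal $\tilde J\subseteq F$ restricts to a closed $T$-invariant ideal $J=\tilde J\cap E$, which by irreducibility of $T$ is either $\{0\}$ or $E$, and the ideal property of $E$ in $F$ forces $\tilde J$ to be either $\{0\}$ or $F$. For $\tilde T\not\in(F'\otimes F)^{\bot}$, one fixes a rank-one witness $(T\wedge(\psi\otimes w))z>0$ of the Harris property of $T$ and computes the corresponding infimum in $F$ via the Riesz--Kantorovich formula, exactly as in the final display of the proof of Proposition~\ref{prop:trivialspectrumabel}: the ideal property of $E\subseteq F$ forces the infimum over $0\leq y\leq z$ taken in $F$ to coincide with that taken in $E$, whence non-disjointness transfers from $T$ to $\tilde T$. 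Proposition~\ref{prop:axmann} then supplies $n\geq 2$ with $\tilde T\wedge\tilde T^n>0$ on $F$, and the same Riesz--Kantorovich-plus-ideal computation (now applied to an infimum of iterates leaving $E$ invariant) pulls this back to $T\wedge T^n>0$ in $E$.

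The principal obstacle is the very first step, the construction of $\phi$. In Proposition~\ref{prop:trivialspectrumabel} the strictly positive $T'$-fixed functional is furnished by Lemma~\ref{lem:fixedpointadjoint} using Abel boundedness, but here no invariance or boundedness data is available beyond positivity and irreducibility. I expect $\phi$ must be built directly from the rank-one witness guaranteed by the Harris property, with irreducibility used to promote initial non-degeneracy to strict positivity (the absolute null ideal $N(\phi)$ is closed and $T$-invariant, hence trivial), and the choice of $\phi$ arranged to be $T'$-superharmonic so that $T$ extends continuously to the $\|\cdot\|_\phi$-completion. Producing such a $\phi$ without a power-boundedness hypothesis is the decisive subtlety; failing that, one might alternatively hope to establish the theorem directly via a Synnatschke--Baire argument carried out in the order-complete dual $E'$, bypassing the $L$-space reduction altogether.
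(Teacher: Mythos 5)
Your overall architecture is exactly the paper's: complete $E$ with respect to an $L$-norm $\lVert x\rVert_\phi=\applied{\phi}{\lvert x\rvert}$ induced by a strictly positive order continuous functional, use \cite[Lem.\ IV 9.3]{schaefer1974} to realize $E$ as an ideal in the completion $F$, transfer irreducibility and non-disjointness from $E'\otimes E$ to the extension $\tilde T$ by the Riesz--Kantorovich computation from the proof of Proposition~\ref{prop:trivialspectrumabel}, invoke Proposition~\ref{prop:axmann}, and pull $\tilde T\wedge\tilde T^n>0$ back to $E$ by the same ideal computation. All of these transfer steps are carried out in the paper precisely as you describe.

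However, you leave the construction of $\phi$ unresolved and explicitly flag it as the decisive obstacle, so there is a genuine gap, even though you have correctly diagnosed what is needed (a strictly positive functional with $T'\phi\leq\lambda\phi$ for some $\lambda$). The fix is simpler than your guess and does not use the Harris property at all: fix $\lambda>\lVert T\rVert$ and any $y'\in E'_+$, $y'\neq 0$, and set $z'\coloneqq(\lambda-T')^{-1}y'=\sum_{k\geq 0}\lambda^{-k-1}T'^ky'$. Then $\lambda z'-T'z'=y'\geq 0$ gives the superharmonicity $T'z'\leq\lambda z'$ for free, which yields both $\lVert Tx\rVert_{z'}\leq\lambda\lVert x\rVert_{z'}$ (so $T$ extends to the completion) and the $T$-invariance of the absolute null ideal $\{x\in E:\applied{z'}{\lvert x\rvert}=0\}$; since $z'\geq\lambda^{-1}y'>0$ this ideal is proper, hence trivial by irreducibility, and $z'$ is strictly positive. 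Order continuity of $z'$ is inherited from the order continuity of the norm on $E$. With this one-line resolvent construction inserted, your argument closes and coincides with the paper's proof; your alternative suggestion of running the Synnatschke--Baire argument directly in $E'$ is unnecessary.
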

\begin{proof}
	Fix $\lambda > \lVert T \rVert$ and $y'\in E'_+$, $y'\neq 0$. Then $z' \coloneqq (\lambda - T')^{-1}y'$ satisfies
	$\lambda z' - T'z' = y'$ and hence $T'z' \leq \lambda z'$.
	This implies that the closed ideal $\{ x\in E : \applied{\lvert x\rvert}{z'}=0\}$ is $T$-invariant
	and thus equal to $\{0\}$. 
	Therefore, $\lVert x\rVert_{z'} \coloneqq \applied{z'}{\lvert x\rvert}$ defines an order continuous lattice norm on $E$.
	Let $(F,\lVert \cdot \rVert_F)$ be the completion of $(E,\lVert \cdot\rVert_{z'})$, i.e. the closure of $E$ in 
	$(E,\lVert \cdot\rVert_{z'})''$. Then $F$ is an $L$-space and,
	since $\lVert Tx \rVert_F \leq \lambda \lVert T\rVert_F$ for all $x\in E$, $T$ uniquely extends to a positive operator $\tilde T$ on $F$.

	Now, it follows as in the proof of Proposition \ref{prop:trivialspectrumabel} that $\tilde T$ is irreducible and
	$\tilde T \not\in (F'\otimes F)^{\bot}$.

	Since the norm on $E$ is order continuous, so is $z'$ and hence $E$ is an ideal in $F$ by \cite[Lem.\ IV 9.3]{schaefer1974}.
	Thus, for $x\in E_+$ and $n\in\N$ we observe that
	\begin{align*}
		(\tilde T\wedge \tilde T^n)x &= \inf_F \{ \tilde T(x-y) +\tilde T^n y  : y\in F,\, 0\leq y\leq x\}\\
		&= \inf_E \{ T(x-y)+T^n y : y\in E,\, 0\leq y\leq x\} \\
		&= (T\wedge T^n)x. 
	\end{align*}
	As $E_+$ is dense in $F_+$, this shows that $\tilde T \wedge \tilde T^n = 0$ if and only if $T \wedge T^n = 0$.
	Thus, the assertion follows from Proposition \ref{prop:axmann}.
\end{proof}

\section*{Acknowledgements}
The author was supported by the graduate school 
\emph{Mathematical Analysis of Evolution, Information and Complexity} during the work on this article
and he would like to thank Wolfgang Arendt for many helpful discussions 
and the anonymous referee for his/her constructive comments.

\bibliographystyle{abbrv}
\bibliography{../analysis}

\end{document}